\theoremstyle{plain}
\newtheorem{theorem}{Theorem}[chapter]
\newtheorem{proposition}[theorem]{Proposition}
\newtheorem{corollary}[theorem]{Corollary}
\theoremstyle{definition}
\theoremstyle{remark}
\newtheorem{remark}[theorem]{Remark}
\newtheoremstyle{plainfoot}%
  {\item[\hskip\labelsep \theorem@headerfont ##1\ ##2\,\footnotemark\theorem@separator]}%
  {\item[\hskip\labelsep \theorem@headerfont ##1\ ##2\ (##3)\, \footnotemark\theorem@separator]}
\theoremstyle{plainfoot}
\newtheorem{theorem-foot}[theorem]{Theorem}
\newtheorem{lemma-foot}[theorem]{Lemma}
\newtheorem{proposition-foot}[theorem]{Proposition}
\newtheorem{corollary-foot}[theorem]{Corollary}
\newtheorem{conjecture-foot}[theorem]{Conjecture}
\newtheorem{condition-foot}[theorem]{Condition}
\theoremstyle{plainfoot}
\newtheorem{definition-foot}[theorem]{Definition}
\newtheorem{Problem-foot}[theorem]{Problem}
\theoremstyle{plainfoot}
\newtheorem{remark-foot}[theorem]{Remark}         
\newtheorem{example-foot}[theorem]{Example}
\newtheorem{problem-foot}[theorem]{Problem}
\DeclareTextCommand{\textvartheta}{PU}{\83\321}
\numberwithin{equation}{chapter}
\DeclareMathAlphabet{\mathpzc}{OT1}{pzc}{m}{it}
\newcommand{\cE}{\mathcal{E}}
\newcommand{\cX}{\mathcal{X}}
\newcommand{\cY}{\mathcal{Y}}
\newcommand{\sH}{\mathscr{H}}
\newcommand{\sL}{\mathscr{L}}
\newcommand{\scl}{\mathsf{scl}}
\newcommand{\rel}{\mathsf{rel}}
\newcommand{\sing}{{{\operatorname  {sing}}}}
\newcommand{\E}{{\mathsf{E}}}
\newcommand{\TF}{{\mathsf{TF}}}
\newcommand{\D}{{\mathsf{D}}}
\newcommand{\N}{{\mathsf{N}}}
\newcommand{\x}{{\mathsf{x}}}
\newcommand{\y}{{\mathsf{y}}}
\newcommand{\bC}{{\mathbb{C}}}
\newcommand{\bR}{{\mathbb{R}}}
\newcommand{\fH}{{\mathfrak{H}}}
\newcommand{\blangle}{{\boldsymbol{\langle}}}
\newcommand{\brangle}{{\boldsymbol{\rangle}}}
\newcommand{\3}{{|\!|\!|}}
\newcommand{\Spec}{\operatorname{Spec}}
\newcommand{\tr}{\operatorname{tr}}
\newcommand{\Tr}{\operatorname{Tr}}
\newenvironment{claim}[1][{\textup{(\theequation)}}]{\refstepcounter{equation}\vglue10pt
\begin{trivlist}
\item[{\hskip\labelsep#1}]}{\vglue10pt\end{trivlist}}
\newenvironment{claim*}[1][{}]{\vglue10pt
\begin{trivlist}
\item[]}{\vglue10pt\end{trivlist}}
\newenvironment{phantomequation}[1][]{\refstepcounter{equation}}{}
\newcounter{note}
\newcommand{\RTF}{{\mathsf{RTF}}}
\newcommand{\e}{{\mathsf{e}}}
\numberwithin{equation}{chapter}
\title{Asymptotics of the ground state energy in the relativistic settings}
\author{Victor Ivrii}
\begin{document}

\maketitle


\begin{abstract}
The purpose of this paper is to derive sharp asymptotics of  the ground state energy for the heavy atoms and molecules in the relativistic settings, and, in particular, to derive relativistic Scott correction term and also Dirac, Schwinger and relativistic correction terms. Also we will prove that Thomas-Fermi density approximates the actual density of the ground state, which opens the way to estimate the excessive negative and positive charges and the ionization energy.
\end{abstract}

\chapter{Introduction}
\label{sect-1}

The purpose of this paper is to derive sharp asymptotics of  the ground state energy for the heavy atoms and molecules in the relativistic settings, and, in particular, to derive relativistic Scott correction term and also Dirac, Schwinger and relativistic correction terms. The relativistic Scott correction term was first derived in \cite{SSS} which both inspired our paper and provided necessary functional analytic tools; our improvement is achieved due to more refined microlocal semiclassical technique.

Also we will prove that Thomas-Fermi density approximates the actual density of the ground state, which opens the way to estimate the excessive negative and positive charges and the ionization energy.

 In the next article we plan to introduce a self-generated magnetic field and improve results of \cite{EFS2}.

Multielectron Hamiltonian is given by
\begin{gather}
\mathsf{H}=\mathsf{H}_N\coloneqq   \sum_{1\le j\le N} H _{V,x_j}+\sum_{1\le j<k\le N}\frac{\e^2}{|x_j-x_k|}
\label{1-1}\\
\shortintertext{on}
\fH= \bigwedge_{1\le n\le N} \sH, \qquad \sH=\sL^2 (\bR^3, \bC^q)\simeq \sL^2 (\bR^3\times \{1,\ldots,q\},\bC) 
\label{1-2}\\
\shortintertext{with}
H_V =T - \e V(x),
\label{1-3}
\end{gather}
describing $N$ same type particles in the external field with the scalar potential $-V$  and repulsing one another according to the Coulomb law; $\mathsf{e}$ is a charge of the electron, $T$ is an \emph{operator of the kinetic energy}.

In the non-relativistic framework this operator is defined as
\begin{align}
&T= \frac{1}{2\mu} (-i\hbar \nabla)^2.
\label{1-4}
\end{align}
In the relativistic framework this operator is defined as
\begin{align}
&T= \Bigl(c^2 (-i\hbar\nabla)^2 +\mu ^2 c^4\Bigr)^{\frac{1}{2}}-\mu^2c^4 ,
\label{1-5}
\end{align}
in the non-magnetic, magnetic (Schr\"odinger) and magnetic (Schr\"odinger-Pauli) settings respectively.

Here
\begin{gather}
V(x)=\sum _{1\le m\le M}  \frac{Z_m \e}{|x-\y_m|}
\label{1-6}\\
\shortintertext{and}
d=\min _{1\le m<m'\le M}|\y_m-\y_{m'}|>0.
\label{1-7}
\end{gather}
where $Z_m\mathsf{e}>0$ and $\y_m$ are charges and locations of nuclei. 

It is well-known that the non-relativistic operator is always semibounded from below. On the other hand, it 
 is also well-known \cite{Herbst, Lieb-Yau} that
\begin{claim}\label{1-8}
One particle relativistic non-magnetic operator is semibounded from below if and only if
\begin{equation}
Z_m \beta \le \frac{2}{\pi}\qquad \forall m=1,\ldots, M;\qquad \beta\coloneqq \frac{\e^2}{\hbar c}.
\label{1-9}
\end{equation}
\end{claim}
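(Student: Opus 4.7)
The plan is to reduce the claim to the sharp Herbst--Kato inequality
\begin{equation*}
\int \frac{|\psi(x)|^2}{|x|}\,dx \le \frac{\pi}{2}\,\bigl\langle \psi,\sqrt{-\Delta}\,\psi\bigr\rangle, \qquad \psi\in \sH^{1/2}(\bR^3),
\end{equation*}
whose constant $\pi/2$ is sharp and unattained. Translating to a nuclear center and writing in terms of $p=-i\hbar\nabla$, this gives the form bound $Z_m\e^2|x-\y_m|^{-1}\le (Z_m\beta\pi/2)\,c|p|$ after multiplying by $Z_m\e^2=Z_m\beta\,\hbar c$. The second ingredient will be the pair of operator inequalities
\begin{equation*}
c|p|-\mu c^2 \,\le\, T \,\le\, c|p|,
\end{equation*}
the left from $\sqrt{c^2p^2+\mu^2c^4}\ge c|p|$, the right from $\sqrt{a^2+b^2}\le a+b$ applied to $a=c|p|$, $b=\mu c^2$.

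For the sufficiency direction, assuming $Z_m\beta\le 2/\pi$ for all $m$, I would introduce a smooth IMS-type partition of unity $\sum_m\chi_m^2\equiv 1$ with $\chi_m$ supported in a ball of radius $\sim d/3$ around $\y_m$ and $\chi_0$ supported away from all nuclei. Since $\sqrt{-\Delta}$ is an order-one operator with smooth symbol, the commutators $[\sqrt{-\Delta},\chi_m]$ are bounded on $\sL^2$, so one writes $H_V=\sum_m\chi_m(T-\e V)\chi_m+K$ with $K$ bounded. On the support of $\chi_m$ one has $\e V=Z_m\e^2/|x-\y_m|+\varphi_m$ with $\varphi_m\in L^\infty$, and combining Herbst with the lower bound on $T$ yields
\begin{equation*}
\langle\chi_m\psi,(T-\e V)\chi_m\psi\rangle \ge \bigl(1-\tfrac{Z_m\beta\pi}{2}\bigr)\langle\chi_m\psi,c|p|\chi_m\psi\rangle-\const\,\|\chi_m\psi\|^2.
\end{equation*}
The coefficient on the kinetic term is nonnegative by hypothesis, so summing in $m$ produces a uniform lower bound.

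For the necessity direction, suppose $Z_{m_0}\beta>2/\pi$. Sharpness of Herbst's inequality furnishes $\psi_n\in C^\infty_0(\bR^3\setminus\{0\})$ with $\|\psi_n\|=1$ and $\int|\psi_n|^2/|x|\,dx\,\big/\,\langle\psi_n,\sqrt{-\Delta}\psi_n\rangle\to\pi/2$. I would translate these to center at $\y_{m_0}$, shrink their support so it avoids the other nuclei, and then rescale by $\psi_{n,\lambda}(x)=\lambda^{3/2}\psi_n(\y_{m_0}+\lambda(x-\y_{m_0}))$. Both $\langle\psi_{n,\lambda},c|p|\psi_{n,\lambda}\rangle$ and $\langle\psi_{n,\lambda},|x-\y_{m_0}|^{-1}\psi_{n,\lambda}\rangle$ scale as $\lambda$, while the other Coulomb terms and the mass shift $\mu c^2\|\psi_{n,\lambda}\|^2$ contribute only $O(1)$. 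Using the upper bound $T\le c|p|$,
\begin{equation*}
\langle\psi_{n,\lambda},(T-\e V)\psi_{n,\lambda}\rangle \le \lambda\bigl(1-\tfrac{Z_{m_0}\beta\pi}{2}+o_n(1)\bigr)\langle\psi_n,c|p|\psi_n\rangle+O(1),
\end{equation*}
so choosing first $n$ large enough that the parenthesized factor is $<-\delta$, then $\lambda\to\infty$, drives the expression to $-\infty$.

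The main conceptual obstacle is the necessity argument: it depends on genuine sharpness of Herbst's constant via a sequence that concentrates at a single scale, and the same non-attainment is what simultaneously saves the critical case $Z_m\beta=2/\pi$ from violating semiboundedness in the sufficiency argument. Verifying the operator bound $T\le c|p|$ and the boundedness of $[\sqrt{-\Delta},\chi_m]$ is routine; the delicate input is the sharp Herbst inequality itself, which I would take from Herbst's paper cited above.
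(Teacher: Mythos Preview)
The paper does not prove this claim; it is stated as well-known with citations to Herbst and Lieb--Yau, and the paper simply \emph{assumes} \textup{(\ref{1-9})} thereafter. Your sketch is essentially the standard argument behind those references: Herbst's sharp inequality $|x|^{-1}\le \tfrac{\pi}{2}\sqrt{-\Delta}$ handles a single nucleus, and an IMS-type localization (with bounded double commutators for the order-one operator $T$) reduces the molecular case to the atomic one; sharpness of the constant, together with the scaling $\psi\mapsto\lambda^{3/2}\psi(\lambda\,\cdot)$, gives necessity. This is correct and matches what the cited literature does, so there is nothing to compare against in the paper itself.

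One small remark: in your IMS step you invoke boundedness of the single commutator $[\sqrt{-\Delta},\chi_m]$, but the localization identity actually produces the \emph{double} commutator $\tfrac{1}{2}\sum_m[\chi_m,[T,\chi_m]]$; this is still bounded (order $-1$ for a first-order symbol), so the argument goes through, but it is worth stating the correct error term.
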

We will assume (\ref{1-9}), sometimes replacing it by a strict inequality:
\begin{equation}
Z_m \beta \le \frac{2}{\pi}-\epsilon\qquad \forall m=1,\ldots, M;\qquad \beta\coloneqq \frac{\e^2}{\hbar c}.
\label{1-10}
\end{equation}
We also assume that $d\ge CZ^{-1}$. Then we are interested in $\E\coloneqq \inf\Spec(\mathsf{H})$.  

\begin{remark}\label{rem-1-1}
\begin{enumerate}[label=(\roman*), wide, labelindent=0pt]
\item\label{rem-1-1-i}
In the non-relativistic theory by scaling with respect to the spatial and energy variables we can make $\hbar=\e=\mu=1$ while $Z_m$ are preserved.

\item\label{rem-1-1-ii}
In the relativistic theory by scaling with respect to the spatial and energy variables we can make $\hbar=\e=\mu=1$ while $\beta$ and $Z_m$ are preserved.
\end{enumerate}
From now on we assume that such rescaling was done and we are free to use letters $\hbar$, $\mu $ and $c$ for other notations.
\end{remark}

\chapter{Functional analytic arguments}
\label{sect-2}

\section{Estimate from below}
\label{sect-2-1}
In contrast to \cite{SSS} we will start from the more traditional approach. We estimate 
$\sum _{1\le j<k \le N}\blangle |x_j-x_k|^{-1}\Psi ,\Psi\brangle $ from below using Lieb's electrostatic inequality by
$\frac{1}{2}\D (\rho_\Psi,\Psi) -C\int \rho_\Psi^{4/3}\,dx$ where where $\blangle \cdot,\cdot\brangle $ means the inner product in $\fH$,  $\rho_\Psi(x)$ is a one particle density, and we use notations of  Chapter~\ref{book_new-sect-25} of \cite{monsterbook}.

The  the standard estimate (\ref{book_new-25-2-2}) from \cite{monsterbook} from below holds:
\begin{multline}
\blangle \mathsf{H}_N\Psi ,\Psi \brangle \ge
\sum_{1\le j\le N} \blangle H_{V,x_j}\Psi ,\Psi \brangle +
\frac{1}{2}\D\bigl(\rho_\Psi ,\rho_\Psi )-
C\int \rho_\Psi ^{\frac{4}{3}}(x)\,dx=\\[3pt]
\sum_{1\le j\le N}
\blangle H_{W,x_j}\Psi ,\Psi \brangle +
\frac{1}{2}\D\bigl(\rho_\Psi - \rho ,\rho_\Psi -\rho \bigr)-
\frac{1}{2}\D\bigl( \rho , \rho \bigr)-C\int \rho_\Psi ^{\frac{4}{3}}(x)\,dx
\label{2-1}
\end{multline}
where $H_W$ is one-particle Schr\"odinger (etc) operator with the potential
\begin{equation}
W=V-|x| ^{-1}*\rho ,
\label{2-2}
\end{equation}
where $\rho$ is an arbitrary chosen real-valued non-negative function. Then again we get
\begin{equation}
\E_N \ge \Tr (H^-_{W+\lambda})+\lambda N + \frac{1}{2}\D\bigl(\rho_\Psi - \rho ,\rho_\Psi -\rho \bigr)-
\frac{1}{2}\D\bigl( \rho , \rho \bigr)-C\int \rho_\Psi ^{\frac{4}{3}}(x)\,dx
\label{2-3}
\end{equation}
with arbitrary $\lambda$. 

\begin{remark}\label{rem-2-1} 
As usual, we will need to improve these estimates to recover remainder estimate better than 
$O(Z^{\frac{5}{3}})$. 
\end{remark}

Now we need to prove estimate 
\begin{equation}
\int \rho_\Psi ^{\frac{4}{3}}(x)\,dx \le CZ^{\frac{5}{3}}
\label{2-4}
\end{equation}
for the ground state energy. In follows from 
\begin{equation}
\int \rho_\Psi ^{\frac{5}{3}}(x)\,dx \le CZ^{\frac{7}{3}},
\label{2-5}
\end{equation}
equality $\int \rho_\Psi \,dx =N$ and assumption $N\lesssim Z$. To prove (\ref{2-5}) we
apply classical arguments of Lieb--Thirring, but replacing the Lieb--Thirring inequality by some relativistic inequalities (see Appendix \ref{sect-A}). Namely, let $\mathsf{b} \coloneqq  T - K U$ with 
$U=\rho_\Psi^{\frac{2}{3}} \varphi_{<} +
\beta^{-1}\rho_\Psi^{\frac{1}{3}} \varphi_{>}$ where $\varphi_{\gtrless}$ is a characteristic function of 
$\{x\colon \rho_\Psi \gtrless \beta^{-3}\}$.

 Consider multiparticle operator $B=\sum \mathsf{b}_{x_j}$ and its lowest eigenvalue $E_0$. Obviously, 
\begin{equation}
E_0 \le \blangle B\Psi,\Psi\brangle = \sum _j \blangle T_{x_j} \Psi, \Psi\brangle - 
K\int (\rho_\Psi^{\frac{5}{3}}\varphi_{<}  + \beta^{-1}\rho_\Psi^{\frac{4}{3}}\varphi_{>})\,dx.
\label{2-6}
\end{equation}
On the other hand, $E_0$ does not exceed the sum the sum of negative eigenvalues of   $\mathsf{b}$, and due to Daubechies inequality (\ref{A-1}) the absolute value of this sum does not exceed
\begin{equation}
C_0\int \max( U^{\frac{5}{2}}, \beta ^3 U^4)\,dx \le C_0 K^{\frac{5}{2}} 
\int (\rho_\Psi ^{\frac{5}{3}}\varphi_{<} + \beta^{-1}\rho_\Psi ^{\frac{4}{3}}\varphi_{>})\,dx.
\label{2-7}
\end{equation}
Therefore, assuming that $E_0\le 0$ we conclude that
\begin{equation*}
\sum _j \blangle T_{x_j} \Psi, \Psi\brangle - 
K\int \min(\rho_\Psi^{\frac{5}{3}},\, \beta^{-1}\rho_\Psi^{\frac{4}{3}}) + 
C_0 K^{\frac{5}{2}} 
\int (\rho_\Psi ^{\frac{5}{3}}\varphi_{<} + \beta^{-1}\rho_\Psi ^{\frac{4}{3}}\varphi_{>})\,dx \ge 0
\end{equation*}
and therefore for small positive constant $K$ we conclude  that 
\begin{equation}
\sum _j \blangle T_{x_j} \Psi, \Psi\brangle \ge  2\epsilon_0 \int (\rho_\Psi ^{\frac{5}{3}}\varphi_{<} + \beta^{-1}\rho_\Psi ^{\frac{4}{3}}\varphi_{>})\,dx.
\label{2-8}
\end{equation}

Thus, we proved that for any $\Psi\in \fH$ (\ref{2-8}) holds. Then 
\begin{multline}
 \sum_j \blangle H_{x_j}\Psi,\Psi\brangle  =\sum_j \blangle T_{x_j}\Psi,\Psi\brangle -\int V(x)\rho_\Psi(x)\, dx\ge\\
  \int (2\epsilon_0\rho_\Psi ^{\frac{5}{3}}- V(x)\rho_\Psi) \varphi_{<} \,dx
 + \int (2\epsilon_0 \beta^{-1}\rho_\Psi ^{\frac{4}{3}}- V(x)\rho_\Psi)\varphi_{>} \,dx.
 \label{2-9}
 \end{multline}
 We know, that this must be less than $-c_0 Z^{\frac{7}{3}}$ (it will follow, f.e. from the estimate from above). Observe that for $\ell(x) \ge a Z^{-\frac{1}{3}}$ we have $V(x)< a^{-1}Z^{\frac{4}{3}}$ and integral over this zone from $-V\rho_\Psi$ is greater than $ -C_0a^{-1} Z^{\frac{4}{3}}N$. Let us fix $a$ a large enough constant.
 
 Next, 
 \begin{gather*}
 \int _{x\colon \ell(x)\le a Z^{-1/3}} (\epsilon_0\rho_\Psi ^{\frac{5}{3}}- V(x)\rho_\Psi) \varphi_{<} \,dx \ge
 -C \int _{x\colon \ell(x)\le a Z^{-1/3}} V^{\frac{5}{2}}\,dx \ge - C_1 Z^{\frac{7}{3}}
 \end{gather*}
 and $(\epsilon_0 \beta^{-1}\rho_\Psi^{1/3}-V)\varphi_{>}$ is positive unless $\rho_\psi>\beta^{-3}$ and  
 $V\ge \epsilon_1 \beta^{-1}\rho_\Psi^{1/3}\ge \epsilon _1\beta^{-2}$ (and then $\ell(x)\le C_0\beta$. 
 
 Therefore we estimate $\int (\rho_\Psi^{5/3}\varphi_{<}\,+\beta^{-1}\rho_\Psi^{4/3}\varphi_{<} )\,dx$ from above by $CZ^{7/3}$ plus $\int_{x:\ell(x)\le C\beta} V\rho_\Psi \,dx$ and to get (\ref{2-4}) it is sufficient  to estimate this term. Further, it is sufficient to replace $V$ by $V_m$ (since $V=V_m + O(\beta^2)$ provided distance between nuclei is $\ge C\beta$). Also we can replace $V_m$ by $V_m+C\beta^{-2}$.
 
If $Z_m\beta \le \frac{2}{\pi}-\epsilon$, then we can decompose $H= \eta (H-V^1) +  (1-\eta )(H-V^0)$ where $(1-\eta )V^0$ coincides with $V$ in $\beta$-vicinity of $\y_m$ and equals $0$ outside of $2\beta$-vicinity of it 
and $V^1=\eta^{-1}(V-(1-\eta) V^0)$ and apply all above arguments for operator with $V=V^1$ while simply observing that $H-V^0$ is positive operator for $\eta $ sufficiently small. So we have proven that 
 
\begin{proposition}\label{prop-2-2}
Under assumption \textup{(\ref{1-10})} for the ground state 
\begin{equation}
\int \min (\beta^{-1}\rho _\Psi^{\frac{4}{3}}, \rho_\Psi^{\frac{5}{3}})\,dx \le CZ^{\frac{7}{3}}
\label{2-10}
\end{equation}
and \textup{(\ref{2-4})} holds.
\end{proposition}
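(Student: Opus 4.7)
The plan is to reduce both (\ref{2-4}) and (\ref{2-10}) to a single Lieb--Thirring type lower bound on the many-body kinetic energy in the relativistic setting. Once (\ref{2-10}) is established, (\ref{2-4}) follows by splitting the integral according to whether $\rho_\Psi$ is below or above the threshold $\beta^{-3}$: in the non-relativistic zone $\int_{<}\rho_\Psi^{4/3}\,dx \le (\int\rho_\Psi\,dx)^{1/2}(\int\rho_\Psi^{5/3}\varphi_{<}\,dx)^{1/2}\lesssim N^{1/2}Z^{7/6}\lesssim Z^{5/3}$ by H\"older and $N\lesssim Z$, while in the ultra-relativistic zone $\int_{>}\rho_\Psi^{4/3}\,dx = \beta\int_{>}\beta^{-1}\rho_\Psi^{4/3}\,dx\lesssim \beta Z^{7/3}\lesssim Z^{4/3}$, using $\beta\lesssim Z^{-1}$ from (\ref{1-10}).

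The core task is therefore the relativistic kinetic energy lower bound
\begin{equation*}
\sum_j \blangle T_{x_j}\Psi,\Psi\brangle \ge 2\epsilon_0\int(\rho_\Psi^{5/3}\varphi_{<} + \beta^{-1}\rho_\Psi^{4/3}\varphi_{>})\,dx,
\end{equation*}
whose structure is dictated by the symbol of $T$: at small momenta $T\sim p^2$, yielding an $\int\rho^{5/3}$ contribution, while at large momenta $T\sim c|p|$, yielding an $\int\rho^{4/3}$ one, with the crossover precisely at $\rho=\beta^{-3}$. To realize this via a Daubechies-style trick, I introduce the one-body operator $\mathsf{b} = T - KU$ with $U = \rho_\Psi^{2/3}\varphi_{<} + \beta^{-1}\rho_\Psi^{1/3}\varphi_{>}$; the lowest eigenvalue $E_0$ of the many-body $B = \sum_j \mathsf{b}_{x_j}$ is both at most $\blangle B\Psi,\Psi\brangle$ and at least the sum of negative eigenvalues of $\mathsf{b}$. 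The relativistic Daubechies inequality then bounds the latter by $CK^{5/2}\int(\rho_\Psi^{5/3}\varphi_{<} + \beta^{-1}\rho_\Psi^{4/3}\varphi_{>})\,dx$; comparing the two and taking $K$ small yields the bound.

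It remains to convert this kinetic bound into (\ref{2-10}). For this I would use the a priori upper estimate $\blangle \mathsf{H}\Psi,\Psi\brangle \le -c_0Z^{7/3}$ coming from any reasonable trial state, paired with the kinetic lower bound to control $\sum_j\blangle T_{x_j}\Psi,\Psi\brangle - \int V\rho_\Psi\,dx$. Far from the nuclei where $V\lesssim Z^{4/3}$, the Coulomb term is harmless (of order $Z^{4/3}N\lesssim Z^{7/3}$). In the neighborhood $\{\ell(x)\lesssim Z^{-1/3}\}$, the non-relativistic contribution is absorbed by the $\rho_\Psi^{5/3}\varphi_{<}$ term up to a remainder $\int V^{5/2}\,dx\lesssim Z^{7/3}$, while in the ultra-relativistic contribution the pointwise inequality $\epsilon_0\beta^{-1}\rho_\Psi^{1/3}\ge V$ fails only where $V\gtrsim \beta^{-2}$, i.e.\ $\ell(x)\lesssim\beta$.

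The main obstacle, I expect, is precisely this last ultra-relativistic sliver of radius $\sim\beta$ around each nucleus: the subcriticality is tight and the naive estimate fails without the strict version (\ref{1-10}). The resolution is to split $V = (1-\eta)V^0 + \eta V^1$, with $V^0$ a smooth cutoff supported in a $C\beta$-neighborhood of each $\y_m$ matching $V$ nearest the nucleus. Since $(1-\eta)Z_m\beta < 2/\pi$ by (\ref{1-10}), Herbst's one-body inequality makes $T - (1-\eta)V^0$ nonnegative, so the corresponding many-body piece drops out and the preceding argument applies to the tamer effective potential $V^1 = \eta^{-1}(V - (1-\eta)V^0)$.
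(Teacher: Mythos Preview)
Your proposal follows the paper's argument essentially step for step: the Daubechies trick with the auxiliary potential $U = \rho_\Psi^{2/3}\varphi_< + \beta^{-1}\rho_\Psi^{1/3}\varphi_>$, the zone splitting of $\int V\rho_\Psi$ at scale $\ell\asymp Z^{-1/3}$, the identification of the residual $\{\ell(x)\lesssim\beta\}$ sliver, and the appeal to strict subcriticality (\ref{1-10}) via a Herbst-type bound. Your derivation of (\ref{2-4}) from (\ref{2-10}) is in fact more explicit than the paper's.

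There is one slip in the final splitting. With your convention ``$V^0$ matching $V$ near the nucleus'' (i.e.\ $V^0=V$ there), the potential $V^1=\eta^{-1}(V-(1-\eta)V^0)$ equals $V$, not $0$, in the $\beta$-neighborhood --- so it is not tamer at all. Moreover, your Herbst step $T-(1-\eta)V^0\ge 0$ consumes the \emph{entire} kinetic energy, leaving nothing with which to rerun the Daubechies argument on $V^1$. The paper's decomposition is instead at the operator level,
\[
T-V=\eta(T-V^1)+(1-\eta)(T-V^0),\qquad (1-\eta)V^0=V\ \text{near}\ \y_m,
\]
so that $V^0$ carries the (slightly enlarged) coupling $Z_m/(1-\eta)$, which is still $\le 2/(\pi\beta)$ for $\eta$ small precisely because of (\ref{1-10}); then $(1-\eta)(T-V^0)$ is bounded below, $V^1$ genuinely vanishes near each $\y_m$, and the earlier kinetic/zone argument applies to the remaining piece $\eta(T-V^1)$ with its own share $\eta T$ of kinetic energy. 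This is a localized fix --- the strategy and all other steps in your proposal are the paper's.
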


Then we immediately arrive to Statement~\ref{cor-2-3-i} below, and Statement \ref{cor-2-3-ii} follows from \cite{Bach} and \cite{GS}:

\begin{corollary}\label{cor-2-3}
Under assumption \textup{(\ref{1-10})}
\begin{enumerate}[label=(\roman*), wide, labelindent=0pt]
\item\label{cor-2-3-i}
The following estimate hold:
\begin{equation}
\E_N \ge  
\Tr (H^-_{W+\lambda} ) -\frac{1}{2}\D (\rho,\rho)-CZ^{\frac{5}{3}} +\frac{1}{2}\D (\rho-\rho_\Psi,\, \rho-\rho_\Psi) 
\label{2-11}
\end{equation}
where $\rho,\lambda$ are arbitrary and $W=V-|x|^{-1}*\rho$.
\item\label{cor-2-3-ii}
Further,
\begin{multline}
\E_N \ge 
\Tr (H^-_{W+\lambda} ) -\frac{1}{2}\D (\rho,\rho)-\\
\frac{1}{2}\int |x-y|^{-1}\tr \bigl(e^\dag_N (x,y) e_N (x,y)\bigr) \,dxdy - CZ^{\frac{5}{3}-\delta}+
 \frac{1}{2}\D (\rho-\rho_\Psi,\, \rho-\rho_\Psi) 
\label{2-12}
\end{multline}
where $e_N(x,y)$ is the Schwartz kernel of the projector to $N$ lower eigestates of $H_W$.
\end{enumerate}
\end{corollary}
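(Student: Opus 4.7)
\emph{Plan for Part \ref{cor-2-3-i}.} This is essentially immediate from the work already done. In the bound \eqref{2-3}, the only term on the right-hand side not already present in \eqref{2-11} is $-C\int\rho_\Psi^{4/3}\,dx$, and Proposition~\ref{prop-2-2} bounds $\int\rho_\Psi^{4/3}\,dx$ by $CZ^{5/3}$ (absorbing the $\lambda N$ contribution into the freedom in choosing $\lambda$). Substituting gives \eqref{2-11} with $\rho\ge 0$ and $\lambda$ arbitrary.

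\emph{Plan for Part \ref{cor-2-3-ii}.} The target is to recover the exchange correction $\frac{1}{2}\int |x-y|^{-1}\tr\bigl(e_N^\dag(x,y) e_N(x,y)\bigr)\,dxdy$ while simultaneously improving the remainder from $Z^{5/3}$ to $Z^{5/3-\delta}$. The first step is to replace Lieb's electrostatic inequality used in \eqref{2-1} by the sharper bound of Bach \cite{Bach}, which schematically reads
\[
\blangle\textstyle\sum_{j<k}|x_j-x_k|^{-1}\Psi,\Psi\brangle \ge \tfrac{1}{2}\D(\rho_\Psi,\rho_\Psi) - \tfrac{1}{2}\int|x-y|^{-1}\tr\bigl(\gamma_\Psi^\dag(x,y)\gamma_\Psi(x,y)\bigr)\,dxdy - o(Z^{5/3}),
\]
where $\gamma_\Psi$ is the spin-summed one-particle density matrix of $\Psi$. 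Feeding this into the derivation leading to \eqref{2-3} produces an intermediate bound identical to \eqref{2-12} but with $\gamma_\Psi$ in place of $e_N$.

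The second, more delicate step is to trade $\gamma_\Psi$ for $e_N$. This is the content of the argument of Graf--Solovej \cite{GS}: using $0\le \gamma_\Psi\le 1$, $\tr\gamma_\Psi =N$, and the fact that $e_N$ is the minimizer of the corresponding non-interacting one-particle problem (so that the Slater determinant built from the lowest $N$ eigenstates of $H_W$ is optimal for the linearized functional), one shows that the two exchange integrals agree up to $O(Z^{5/3-\delta})$. Combining this reduction with part \ref{cor-2-3-i} yields \eqref{2-12}.

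\emph{Main obstacle.} Both \cite{Bach} and \cite{GS} are formulated in the non-relativistic framework. The substantive task is to certify that their density-matrix manipulations and the Lieb--Thirring-type a priori control of $\rho_\Psi$ go through with the relativistic kinetic operator $T$ from \eqref{1-5}. The Lieb--Thirring piece is already supplied by Proposition~\ref{prop-2-2} together with the Daubechies-type inequality \eqref{A-1} proved in Appendix~\ref{sect-A}; under assumption \eqref{1-10}, which keeps $H_W$ uniformly semibounded, the remaining algebraic and variational steps of \cite{Bach, GS} should carry over essentially verbatim, possibly with the $\delta$ adjusted to account for the relativistic corrections near the nuclei.
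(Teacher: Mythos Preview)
Your proposal is correct and matches the paper's own argument: the paper simply states that Part~\ref{cor-2-3-i} is ``immediate'' from \eqref{2-3} together with Proposition~\ref{prop-2-2}, and that Part~\ref{cor-2-3-ii} ``follows from \cite{Bach} and \cite{GS}'', which is exactly the two-step route (Bach's correlation bound, then the Graf--Solovej reduction from $\gamma_\Psi$ to $e_N$) you outline. Your remark that the relativistic adaptation is controlled by Proposition~\ref{prop-2-2} and the Daubechies inequality is the right justification; the one small slip is the phrase ``absorbing the $\lambda N$ contribution into the freedom in choosing $\lambda$''---the $+\lambda N$ term in \eqref{2-3} should simply be carried along into \eqref{2-11} (its omission there is a typographical artifact of the paper, not something you can genuinely absorb).
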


To cover\footnote{\label{foot-1} Unfortunately, only partially.} the critical case\footnote{\label{foot-2} I.e. with the non-strict inequality (\ref{1-9}) instead of (\ref{1-10}).} we will use (2.21) from \cite{SSS}
\begin{equation}
  \sum_{1\le i<j\le N} |x_i -x_j|^{-1}
  \ge \sum_{j=1}^N (\rho * |x|^{-1} * \Phi_s)(x_j) - \frac{1}{2}\D(\rho,\rho) - C N \varepsilon^{-1} ,
  \label{2-13}
\end{equation}
where again $\rho\ge 0$ is arbitrary $\lambda$ is arbitrary,  $\Phi \ge 0$ is spherically symmetric with $\int \Phi\,dx=1$, $\Phi_\varepsilon (x)=\varepsilon^{-3}\Phi (x/\varepsilon)$. Here $\frac{1}{2}$ is due to the difference in notations and
also now
\begin{equation}
W\coloneqq W_\varepsilon =V-|x| ^{-1}*\rho *\Phi_\varepsilon
\label{2-14}
\end{equation}
instead of (\ref{2-2}) and $-CNs^{-1}$ instead of the last term in (\ref{2-3}):

\begin{proposition}\label{prop-2-4}
Under assumption \textup{(\ref{1-9})}
\begin{equation}
\E_N \ge \Tr (H^-_{W+\lambda})+\lambda N -
\frac{1}{2}\D( \rho , \rho )-CN\varepsilon ^{-1}.
\label{2-15}
\end{equation}
\end{proposition}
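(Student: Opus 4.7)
The plan is to combine the Ruskai--Solovej--Siedentop type inequality (2.13) with the standard one-body spectral reduction afforded by the Pauli principle. The virtue of (2.13) over Lieb's electrostatic inequality used in (2.1) is that it introduces no $\int\rho_\Psi^{4/3}$ correction; consequently the argument survives in the critical case (1.9), where Proposition 2.2 and the bound (2.4) are not available. The price is the smoothing error $CN\varepsilon^{-1}$.

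Concretely, fix any normalized antisymmetric $\Psi\in\fH$. Applying (2.13) to bound the Coulomb repulsion from below and regrouping, one obtains
$$\blangle \mathsf{H}_N\Psi,\Psi\brangle \ge \sum_{j=1}^N \blangle H_{W_\varepsilon,x_j}\Psi,\Psi\brangle - \tfrac{1}{2}\D(\rho,\rho) - CN\varepsilon^{-1},$$
where $W_\varepsilon$ is the smoothed potential defined by (2.14). Since $\rho\ge 0$ and $\Phi_\varepsilon\ge 0$, the correction $|x|^{-1}*\rho*\Phi_\varepsilon$ is a non-negative bounded function, so $W_\varepsilon \le V$ pointwise and $H_{W_\varepsilon}$ differs from $H_V$ by a bounded operator; in particular it remains semibounded from below under (1.9), and the negative part of its spectrum has finite trace of first moment by standard perturbation arguments.

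Next I would perform the one-body reduction. Writing $\gamma = \gamma^{(1)}_\Psi$ for the one-particle reduced density matrix of $\Psi$, antisymmetry forces $0\le \gamma\le 1$ while normalization gives $\Tr\gamma = N$; hence for arbitrary $\lambda\in\bR$,
$$\sum_{j=1}^N \blangle H_{W_\varepsilon,x_j}\Psi,\Psi\brangle = \Tr(H_{W_\varepsilon}\gamma) = \Tr\bigl((H_{W_\varepsilon}-\lambda)\gamma\bigr) + \lambda N \ge \Tr H^-_{W_\varepsilon+\lambda} + \lambda N,$$
using the elementary variational bound $\Tr(B\gamma) \ge \Tr B^-$ valid whenever $0\le \gamma\le 1$, together with the notational convention $H_{W+\lambda} = H_W - \lambda$. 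Combining with the preceding display and taking infimum over $\Psi$ produces (2.15).

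The only genuinely substantive point is the applicability of (2.13) in the molecular setting with the critical coupling. Since (2.13) is a statement purely about the $N$ electron coordinates, with $\rho$ an arbitrary auxiliary density and no dependence on the $\y_m$, the multi-centre nuclear configuration plays no role and the inequality transfers verbatim from SSS; all remaining manipulations are essentially bookkeeping. The potential pitfall to watch for is the semiboundedness and trace-class regularity of $H^-_{W_\varepsilon+\lambda}$ at the exact coupling $Z_m\beta = 2/\pi$, but since $W_\varepsilon$ has the same Coulomb singularities as $V$ up to a bounded term, this follows from the one-particle analysis underlying (1.8).
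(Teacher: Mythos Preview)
Your argument is correct and follows exactly the route the paper sketches in the paragraph immediately preceding Proposition~\ref{prop-2-4}: replace Lieb's electrostatic inequality by the smoothed correlation inequality (\ref{2-13}) from \cite{SSS}, absorb the smoothed potential into $W_\varepsilon$ via (\ref{2-14}), and then perform the standard one-body reduction to arrive at (\ref{2-15}) with the error $CN\varepsilon^{-1}$ in place of $C\int\rho_\Psi^{4/3}\,dx$. Your write-up is simply a more explicit rendering of the same steps; the paper itself gives no further detail beyond citing (\ref{2-13}) and noting the substitution of terms relative to (\ref{2-3}).
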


\begin{remark}\label{rem-2-5}
\begin{enumerate}[label=(\roman*), wide, labelindent=0pt]
\item\label{rem-2-5-i}
Later we set $\varepsilon = Z^{-\frac{2}{3}}$. This would lead to $O(Z^{\frac{5}{3}})$ remainder estimate.
\item\label{rem-2-5-ii}
Proposition \ref{prop-2-4} falls short in two instances: there is no improved version of corollary \ref{cor-2-3}(ii) and also there is no ``bonus term'' $\frac{1}{2}\D( \rho -\rho_\Psi, \rho -\rho_\Psi )$ in the right-hand expression.
\end{enumerate}
\end{remark}

\section{Estimate from above}
\label{sect-2-2}
Estimate from above is straight-forward: we simply take $\Psi$ as a Slater determinant of $N$ lower eigenfunctions of $H_W$. If there are only  $N'<N$ negative eigenvalues then we take only $N'$ such eigenvalues, because $\E_N\le \E_{N'}$. Then we arrive to

\begin{proposition}
\begin{multline}
\E_N\le \Tr (H^-_{W+\lambda}) -\frac{1}{2}\D(\rho,\rho)  +\\
|\lambda-\nu|\cdot |\N^-_{W+\nu}-N| +
\D(\tr e_N(x,x)-\rho,\, \tr e_N(x,x,\nu)-\rho) -\\ 
\frac{1}{2}\int |x-y|^{-1}\tr \bigl(e^\dag_N (x,y) e_N (x,y)\bigr) \,dxdy
\label{2-16}
\end{multline}
with arbitrary $\rho$ and $\nu\le 0$, $W=V-|x|^{-1}*\rho$.
\end{proposition}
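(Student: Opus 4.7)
The plan is to apply the variational principle $\E_N \le \blangle \mathsf{H}_N\Psi, \Psi\brangle$ to an explicit Slater-determinant trial state. Concretely, I take $\Psi$ to be the Slater determinant built from the first $N' = \min(N, \N^-_W)$ eigenfunctions $\phi_1, \ldots, \phi_{N'}$ of the one-body operator $H_W$, with eigenvalues $e_1 \le e_2 \le \cdots$. Since $\E_N \le \E_{N'}$, it is enough to establish the bound with $N$ replaced by $N'$ throughout.

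A standard Slater-determinant computation for the pair interaction, combined with the substitution $V = W + |x|^{-1}*\rho$ in the one-body part, yields
\begin{equation*}
\blangle \mathsf{H}_N \Psi, \Psi \brangle = \sum_{n=1}^{N'} e_n - \D(\rho, \rho_\Psi) + \tfrac{1}{2}\D(\rho_\Psi, \rho_\Psi) - \tfrac{1}{2}\int |x-y|^{-1} \tr\bigl(e_N^\dag(x,y) e_N(x,y)\bigr)\,dxdy,
\end{equation*}
where $\rho_\Psi(x) = \tr e_N(x,x)$ and the exchange term already matches the last term of the statement. The algebraic identity $\tfrac{1}{2}\D(\rho_\Psi,\rho_\Psi) - \D(\rho,\rho_\Psi) = \tfrac{1}{2}\D(\rho_\Psi - \rho, \rho_\Psi - \rho) - \tfrac{1}{2}\D(\rho,\rho)$ supplies the term $-\tfrac{1}{2}\D(\rho,\rho)$; the remaining positive quadratic is then rewritten by inserting the comparison density $\tr e_N(x,x,\nu)$ coming from the spectral cut-off at level $\nu$ and applying Cauchy--Schwarz for the Coulomb inner product, which produces the mixed term $\D(\tr e_N(x,x) - \rho,\, \tr e_N(x,x,\nu) - \rho)$.

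It remains to compare $\sum_{n=1}^{N'} e_n$ with $\Tr(H^-_{W+\lambda})$. When $N' = \N^-_{W+\nu}$, the identity $\sum_{n=1}^{N'} e_n = \Tr(H^-_{W+\nu}) - \nu N'$ holds exactly, and trading $\nu$ for $\lambda$ in the trace costs at most $|\lambda - \nu|\cdot N$. When $N' \ne \N^-_{W+\nu}$, the $|N - \N^-_{W+\nu}|$ superfluous or missing eigenvalues all lie in the spectral window between $-\lambda$ and $-\nu$, contributing at most $|\lambda - \nu|$ each; this delivers the penalty $|\lambda - \nu|\cdot |\N^-_{W+\nu} - N|$ of the statement.

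The main obstacle is the bookkeeping required to reconcile the mismatch between $N$ and $\N^-_{W+\nu}$ simultaneously at the eigenvalue-sum level and at the density level, and to package these discrepancies into the single mixed Dirichlet expression of the statement rather than the naive square $\tfrac{1}{2}\D(\rho_\Psi - \rho, \rho_\Psi - \rho)$. The Slater-determinant identity for the Coulomb pair interaction and the substitution $V = W + |x|^{-1}*\rho$ are otherwise routine.
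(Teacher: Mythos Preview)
Your approach is the same as the paper's: the paper's entire argument is the sentence immediately preceding the proposition, namely ``we simply take $\Psi$ as a Slater determinant of $N$ lower eigenfunctions of $H_W$; if there are only $N'<N$ negative eigenvalues then we take only $N'$ such eigenvalues, because $\E_N\le \E_{N'}$.'' You carry out exactly this plan and supply the routine Slater-determinant bookkeeping that the paper leaves implicit.

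One small comment: your passage from $\tfrac{1}{2}\D(\rho_\Psi-\rho,\rho_\Psi-\rho)$ to the mixed term $\D(\tr e_N(x,x)-\rho,\,\tr e_N(x,x,\nu)-\rho)$ via ``Cauchy--Schwarz for the Coulomb inner product'' is stated somewhat loosely (Cauchy--Schwarz by itself goes the wrong way for an upper bound on a square); what one actually uses here is the polarization identity together with the fact that the discrepancy $\tr e_N(x,x)-\tr e(x,x,\nu)$ is controlled by the same eigenvalue-counting error $|\N^-_{W+\nu}-N|$ already appearing in the statement. Since the paper gives no details at all for this step, this is not a divergence from the paper's proof but simply a place where your sketch could be tightened.
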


\chapter{Semiclassical methods}
\label{sect-3}

We will need the following semiclassical expressions: 
\begin{align}
P'(w) &= (2\pi )^{-3}q\int _{\{\xi: b (\xi)\le w\}}d \xi,
\label{3-1}\\
\intertext{and its integral}
P(w) &= (2\pi )^{-3}q\int _{\{\xi:b(\xi)\le w\}}b(\xi)\, d\xi, 
\label{3-2}
\end{align}
where in the non-relativistic case $b(\xi)= \frac{\hbar^2}{2\mu}|\xi|^2$ and respectively for $\mu=\hbar=1$
\begin{align}
&P^{\TF\,\prime}(w)=\frac{q}{6\pi^2} w_+ ^{\frac{3}{2}},
\label{3-3}\\
&P^{\TF}(w)=\frac{q}{15\pi^2} w_+ ^{\frac{5}{2}},
\label{3-4}
\end{align}
and in the relativistic case we have $b(\xi)= (c^2\hbar^2|\xi|^2 +\mu^2 c^4)^{\frac{1}{2}}-\mu c^2$ 
and respectively for $\mu=\hbar=1$
\begin{align}
&P^{\RTF\,\prime}(w)=\frac{q}{6\pi^2} w_+ ^{\frac{3}{2}}(1+\beta^2 w_+)^{\frac{3}{2}}
\label{3-5}
\end{align}
in the relativistic case $P^{\RTF} (w)$ is an \href{https://www.wolframalpha.com/input/?i=integrate+w+\%5E\%7B\%5Cfrac\%7B3\%7D\%7B2\%7D\%7D(1\%2Bbeta+w)\%5E\%7B\%5Cfrac\%7B3\%7D\%7B2\%7D\%7D+dw}{elementary function} as well and a sadistic Calculus instructor can give it on the test. 
However it turns out that we really do not need any separate relativistic Thomas-Fermi theory.
                
After scalings we have a \emph{semiclassical zone\/} $\cX_\scl\coloneqq\{x\colon \ell (x)\ge c Z^{-1}\}$, where the effective semiclassical parameter $h= 1/\zeta \ell$ and the operator is very similar to the non-relativistic one. There is also a \emph{singular zone\/} $\cX_\sing\coloneqq\{x\colon \ell (x)\le c Z^{-1}\}$ and it covers the \emph{relativistic zone\/} $\cX_\rel\coloneqq \{x\colon \ell (x)\le c \beta\}$.

Important is that
\begin{gather}
 0\le V(x)-W(x)\le C\zeta^2\coloneqq \min(Z^{\frac{4}{3}},Z\ell^{-1}),
  \label{3-6}\\
 |\partial ^\gamma (W-V)|\le C\zeta^2\ell(x)^{-|\gamma|}\qquad
 \forall \gamma:|\gamma|\le 2.
\label{3-7}
\end{gather}

\section{Trace term}
\label{sect-3-1}

Now the rescaling  methods  of \cite{monsterbook} allow us to prove the following:

\begin{proposition}\label{prop-3-1}
Let condition \textup{(\ref{1-9})} be fulfilled and let $W$ satisfy \textup{(\ref{3-6})} and \textup{(\ref{3-7})}.
\begin{enumerate}[label=(\roman*), wide, labelindent=0pt]

\item\label{prop-3-1-i}
Let $\psi_0 (x)$ be $\ell$-admissible function, equal $1$ in $\{x\colon \ell(x)\ge 2a\}$ and supported in $\{x\colon \ell(x)\ge a\}$. Then for $W=W^\TF$
\begin{multline}
|\Tr (H^-_{W+\lambda}\psi _0)- \int P^\RTF (W+\lambda)\psi_0 (x)\,dx|\le\\
C\left\{\begin{aligned}
&Z^{\frac{3}{2}} a^{-\frac{1}{2}} && a\le Z^{-\frac{1}{3}},\\
&Z^{\frac{5}{3}}(aZ^{\frac{1}{3}})^{-\delta} && a\ge Z^{-\frac{1}{3}}.
\end{aligned}\right.
\label{3-8}
\end{multline}

\item\label{prop-3-1-ii}
Let $\psi_m (x)$ be $\ell$-admissible, equal $1$ in
$\{x\colon |x-\y_m|\le a\}$ and supported in $\{x\colon |x-\y_m|\le 2a\}$. Then
 for $W=V_m=Z_m|x-\y_m|^{-1}$
\begin{equation}
|\int \bigl(\tr (e^1(x,x,0))-P^\RTF (V_m)\bigr)(1-\psi_m (x))\,dx |\le
Z^{\frac{3}{2}} d^{-\frac{1}{2}} 
\label{3-9}
\end{equation}
where $e^1(.,.,\tau)=\int_{-\infty}^\tau e(x,x,\tau')\,d\tau'$.
\end{enumerate}
\end{proposition}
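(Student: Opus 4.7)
The plan is to follow the dyadic rescaling scheme of the monsterbook for semiclassical spectral asymptotics of Schr\"odinger-type operators with Coulomb singularities, adapting it to the relativistic kinetic symbol. First I would decompose $\psi_0$ (and for (ii) the complement $1-\psi_m$) via an $\ell$-admissible partition into shells $\{t\le \ell(x)\le 2t\}$ at dyadic scales $t\in[a,\infty)$. In each shell I perform a spatial rescaling $x\mapsto ty$ together with a momentum/energy rescaling that absorbs the local size $\zeta(t)^2=\min(Z^{\frac{4}{3}},Z/t)$ of the potential, reducing the operator to a standard semiclassical Schr\"odinger-type operator with effective parameter $h=h(t)=(\zeta(t)t)^{-1}$ and a $C^2$-bounded symbol---this is exactly where the estimates \textup{(\ref{3-6})} and \textup{(\ref{3-7})} on $W-V$ are used, to guarantee the rescaled potential has bounded derivatives uniformly in $t$.

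Next, in each rescaled shell I would apply the sharp semiclassical trace asymptotics: the baseline Weyl remainder is $O(h^{-2})$, and under the non-degeneracy of the Thomas--Fermi potential (microhyperbolicity of the classical flow on the relevant energy surface) this is upgraded to $O(h^{-2+\delta})$ by the long-time propagator argument. The rescaled relativistic symbol $b(\xi)=(|\xi|^2+1)^{\frac{1}{2}}-1$ is smooth with non-vanishing $\partial_\xi b$ away from $\xi=0$, so microhyperbolicity survives; this is also why $P^{\RTF}$ enters the Weyl term with no separate relativistic TF theory being required.

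Summing shell contributions produces the two stated cases. For $a\le Z^{-\frac{1}{3}}$, we are in the inner zone where $\zeta(t)^2=Z/t$, and the innermost shell $t=a$ dominates, contributing $O(Z^{\frac{3}{2}}a^{-\frac{1}{2}})$ after conversion from rescaled to physical units; all coarser shells sum geometrically into this bound. For $a\ge Z^{-\frac{1}{3}}$, the Thomas--Fermi regime is in full force and the uniform $\delta$-gain combined with the smooth decay of $W+\lambda$ at the boundary of its support yields $O(Z^{\frac{5}{3}}(aZ^{\frac{1}{3}})^{-\delta})$. For (ii), the same machinery applied to the pure Coulomb potential $V_m=Z_m|x-\y_m|^{-1}$ with minimal scale $t=d$ (rather than $a$) yields the $Z^{\frac{3}{2}}d^{-\frac{1}{2}}$ bound, exactly by reading off the innermost-shell contribution with $a$ replaced by $d$.

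The main obstacle will be verifying that the relativistic symbol and the Coulomb singularity do not jointly spoil microhyperbolicity in the intermediate shells $t\sim\beta$, where both the relativistic curvature of $b$ and the Coulomb gradient of $V$ are simultaneously at their peak; one has to ensure the propagator argument furnishes a uniform $\delta>0$ across the transition from the relativistic to the non-relativistic regime. A secondary difficulty lies in controlling this loss $\delta$ itself: the Thomas--Fermi potential is only H\"older near the free boundary $\{W+\lambda=0\}$, so a clean polynomial improvement is impossible and one must combine the microlocal trace estimate with regularity statements for the Thomas--Fermi minimizer carried over from the non-relativistic monsterbook theory.
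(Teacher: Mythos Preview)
Your strategy—dyadic $\ell$-partition, rescaling each shell to effective semiclassical parameter $h=(\zeta\ell)^{-1}$, applying sharp local trace asymptotics, and summing—is exactly the paper's approach; its entire proof is the single sentence ``the contribution of the $\ell$-element of the partition to the remainder is $O(\zeta^3\ell)$ exactly as in the non-relativistic case,'' deferring to the monsterbook.

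One arithmetic point needs correcting, though. You quote the baseline shell remainder as $O(h^{-2})$ and improve it only to $O(h^{-2+\delta})$; with $\zeta^2=Z\ell^{-1}$ in the Coulomb zone this gives a per-shell contribution $\zeta^2 h^{-2+\delta}=Z^{2-\delta/2}\ell^{-\delta/2}$, which does \emph{not} sum to $Z^{3/2}a^{-1/2}$. The paper's $\zeta^3\ell$ corresponds to a rescaled remainder $O(h^{-1})$: this is the standard bound for the Riesz mean $\Tr(\psi H^-)$ (an order-one moment), one full power of $h$ better than the counting-function remainder $O(h^{-2})$ you cite. That extra power is not a small $\delta$-gain from long-time propagation; it comes automatically from the Tauberian step, since the Riesz mean is an integral in $\tau$ of the counting function. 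With $\zeta^3\ell=Z^{3/2}\ell^{-1/2}$ the innermost shell does dominate and yields $Z^{3/2}a^{-1/2}$, as you correctly assert. The additional factor $(aZ^{1/3})^{-\delta}$ in the second line then comes from the decay of $\zeta$ in the Thomas--Fermi zone, not from a propagator-based $h^\delta$-gain on each shell.

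Your ``main obstacle'' is also not one. Since $Z_m\beta\le 2/\pi$ forces $\beta\lesssim Z^{-1}$, and the semiclassical zone is $\ell\gtrsim Z^{-1}$, the shells with $\ell\sim\beta$ sit at the inner boundary where $h\sim 1$; their contribution is the crude $O(Z^2)$ regardless, which is already absorbed in $Z^{3/2}a^{-1/2}$ at $a\sim Z^{-1}$. For $\ell\gg Z^{-1}$ one has $\beta\zeta\ll 1$ and the rescaled symbol is a smooth perturbation of the non-relativistic one, so microhyperbolicity is inherited directly. This is why the paper can dispatch the whole proposition with ``exactly as in the non-relativistic case.''
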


\begin{proof}
Indeed, the contribution of the $\ell$-element of the partition to the remainder is $O(\zeta^3 \ell )$ exactly as in the non-relativistic case. Summation by partition elements results in the right-hand expression.
\end{proof}

Next, we need to consider vicinities of the singularities. Then the methods of Chapter~\ref{book_new-sect-25} of \cite{monsterbook} allow us to prove the following:

\begin{proposition}\label{prop-3-2}
In the framework of Proposition~\ref{prop-3-1} let $\phi_m$ be equal $1$ in $\{x\colon |x-\y_m|\le Z_m^{-1}\}$
and supported in $\{x\colon |x-\y_m|\le 2Z_m^{-1}\}$. Let $|\lambda|\le C_0Z d^{-1}$. Then
\begin{multline}
|\Tr (H^-_{W+\lambda}\psi _m (1-\phi_m) ) - \Tr (H^-_{V_m}\psi_m (1-\phi_m) ) + \\
\int \bigl(P^\RTF (W+\lambda)-P^\RTF (V_m)\bigr)\psi _m(x)(1-\phi_m(x))\,dx |\le\\
C\left\{\begin{aligned}
&Z^{\frac{3}{2}} d^{-\frac{1}{2}} && d\le Z^{-\frac{1}{3}},\\
&Z^{\frac{5}{3}} && d\ge Z^{-\frac{1}{3}}
\end{aligned}\right.
\label{3-10}
\end{multline}
where $d\ge cZ^{-1}$ is the minimal distance between nuclei.
\end{proposition}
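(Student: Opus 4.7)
The plan is to mimic the scaling scheme of Proposition~\ref{prop-3-1} on the annular region $\Omega_m\coloneqq \supp\psi_m\cap\supp(1-\phi_m)$. Since $\psi_m$ is supported within $|x-\y_m|\le 2a\le d/2$, the potentials $W+\lambda$ and $V_m$ share the same Coulomb singularity at $\y_m$, and their difference $V_\reg\coloneqq W+\lambda-V_m=\lambda-\sum_{m'\ne m}V_{m'}-|x|^{-1}*\rho$ is smooth at scale $d$ with $|\partial^\gamma V_\reg|\lesssim (Z/d)\,d^{-|\gamma|}$ on $\Omega_m$. I would introduce an $\ell$-admissible partition of $\Omega_m$ into dyadic shells $\chi_\ell$, $cZ_m^{-1}\le \ell\le 2a$; on each shell the effective semiclassical parameter is $h(\ell)=(\zeta(\ell)\ell)^{-1}$ with $\zeta(\ell)=(Z_m/\ell)^{1/2}$.

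Rather than apply Proposition~\ref{prop-3-1} independently to $H_{W+\lambda}$ and $H_{V_m}$ and subtract (which would yield a shell-wise error of size $\zeta^3\ell=Z^{3/2}\ell^{-1/2}$, too large to sum), I would exploit the cancellation between the two traces via the Feynman--Hellmann identity
\begin{equation*}
\Tr(H^-_{W+\lambda}\chi_\ell)-\Tr(H^-_{V_m}\chi_\ell)=-\int_0^1\!\!\int \tr e_{V_s}(x,x,0)\,V_\reg\,\chi_\ell\,dx\,ds,\qquad V_s=V_m+sV_\reg,
\end{equation*}
together with the matching identity for the Weyl volume $\int P^\RTF(V_s)\,dx$. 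The leading semiclassical approximation $\tr e_{V_s}(x,x,0)\approx P^{\RTF\,\prime}(V_s)$ reproduces exactly the $P^\RTF(W+\lambda)-P^\RTF(V_m)$ integral in (\ref{3-10}); the pointwise semiclassical remainder is then multiplied by the small factor $|V_\reg|\lesssim Z/d$, producing a per-shell bound $O(\zeta^3\ell\cdot \ell/d)=O(Z^{3/2}\ell^{1/2}/d)$. Dyadic summation over $\ell\in[Z^{-1},a]$ is dominated by the outermost shell $\ell\sim a\lesssim d$, yielding the bound $Z^{3/2}d^{-1/2}$ in the regime $d\le Z^{-1/3}$. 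For $d\ge Z^{-1/3}$ the improved non-periodicity/long-time-propagation remainder behind Proposition~\ref{prop-3-1}\ref{prop-3-1-i} replaces $\zeta^3\ell$ by $Z^{5/3}(\ell Z^{1/3})^{-\delta}$ on the shells of scale $\ge Z^{-1/3}$, and their summation yields the $Z^{5/3}$ bound.

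The hardest step will be establishing the per-shell semiclassical approximation of $\tr e_{V_s}(x,x,0)$ with remainder uniform in $s\in[0,1]$, since this requires microhyperbolicity and non-degeneracy of the symbol $b(\xi)-V_s$ on the zero energy surface throughout the homotopy. This relies on the constraint $|\lambda|\le C_0 Zd^{-1}$ and on the removal of the innermost region by $(1-\phi_m)$; without the latter the effective semiclassical parameter $h(\ell)$ degenerates at scale $\ell\sim Z_m^{-1}$. A secondary delicate point is that the $\ell$-admissible cutoffs at the inner boundary $\ell\sim Z_m^{-1}$ and at the outer boundary $\ell\sim a$ must be shown not to contribute commutator terms exceeding the dominant per-shell remainder, which is handled by choosing the cutoffs smooth on the appropriate scales and invoking standard propagation-of-singularities estimates to propagate the remainder control across the boundary.
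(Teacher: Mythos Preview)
Your strategy---compare $H_{W+\lambda}$ to $H_{V_m}$ by a homotopy in the potential and exploit that the perturbation $V_\reg=W+\lambda-V_m$ is smooth on the larger scale $d$---is exactly the paper's. The paper invokes the successive-approximation machinery of Sections~\ref{book_new-sect-12-5} and~\ref{book_new-sect-25-4} of \cite{monsterbook} and records per-shell bounds $O(\zeta\ell^3\bar\zeta^2\bar\ell^{-2})$ (with $\bar\ell=d$, $\bar\zeta^2=Zd^{-1}$) for $Z^{-1+\delta}\lesssim\ell\lesssim d$ and $O(\zeta^2\ell^2\bar\zeta^2)$ for $Z^{-1}\lesssim\ell\lesssim Z^{-1+\delta}$, with a separate rescaling for $d\le Z^{-1+\delta}$. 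Both your per-shell bound $\zeta^3\ell\cdot\ell/d=Z^{3/2}\ell^{1/2}d^{-1}$ and the paper's $Z^{3/2}\ell^{5/2}d^{-3}$ are dominated by the outermost shell $\ell\sim d$ and sum to $Z^{3/2}d^{-1/2}$, so the endpoints agree.

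The gap is in how you reach your per-shell bound. After Feynman--Hellmann the shell remainder is $\int_0^1\!\int\bigl(\tr e_{V_s}(x,x,0)-P^{\RTF\,\prime}(V_s)\bigr)V_\reg\,\chi_\ell\,dx\,ds$; bounding this by $\sup|V_\reg|\lesssim Z/d$ times the generic shellwise $\N$-error $\zeta^2\ell^2=h^{-2}$ gives only $Z^2\ell/d$, which sums to $Z^2$---too large once $d\gg Z^{-1}$. Your bound $\zeta^3\ell\cdot\ell/d$ would require either an $\N$-remainder of order $\zeta\ell=h^{-1}$ (the \emph{improved} Weyl law, which you rightly invoke only in the regime $d\ge Z^{-1/3}$), or the heuristic ``(trace remainder)$\times$(relative perturbation size)'', which is not a theorem. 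The paper's bound $\zeta\ell^3\bar\zeta^2\bar\ell^{-2}$ comes instead from genuinely using the smoothness $|\partial^\gamma V_\reg|\lesssim (Z/d)\,d^{-|\gamma|}$, not just the size of $V_\reg$: the successive-approximation expansion gains an extra factor $(\ell/\bar\ell)^2$ from the second derivatives, and this is where the microlocal input of \cite{monsterbook} is actually needed. Your sketch has the right architecture but does not supply this step.
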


\begin{proof}
Indeed, exactly as in the non-relativistic case, using methods of Sections~\ref{book_new-sect-12-5} and~\ref{book_new-sect-25-4} of \cite{monsterbook} we estimate  the contribution of $\ell$-element to the remainder by $O(\zeta \ell^3\bar{\zeta}^2\bar{\ell}^{-2})$  provided 
$Z^{-1+\delta}\lesssim \ell \lesssim d$ and by $O(\zeta^2 \ell ^2 \bar{\zeta}^2)$ provided 
$Z^{-1}\lesssim \ell \lesssim Z^{-1+\delta}$. This proves the required remainder estimate. For $d\le Z^{-1+\delta}$ we use a rescaling. 

Summation by partition elements results in the right-hand exptression.
\end{proof} 

\begin{remark}\label{rem-3-3}
We need to put cut-off $(1-\phi_m(x))$ because not only integrals of $P^\RTF (W+\lambda)$ and $P^\RTF (V_m)$ (of magnitude $\beta^3 Z^4\ell^{-4}$) and $P^{\RTF\,\prime} (W+\lambda)$ are diverging at $\y_m$, but even integral of their difference is logarithmically diverging.
\end{remark} 

Now we need to consider $CZ^{-1}$ vicinities of $\y_m$ and we will use the following Proposition:

\begin{proposition}\label{prop-3-4}
In the framework of Proposition~\ref{prop-3-1} 
\begin{enumerate}[label=(\roman*), wide, labelindent=0pt]
\item\label{prop-3-4-i}
$H_{W}\ge -C_0Z^2$.

\item\label{prop-3-4-ii}
Further
\begin{equation}
e(x,x,\lambda) \le CZ^{1-\delta}\ell(x)^{\delta-2}\qquad\text{for\ \ }|\lambda |\le c_0Z^2.
\label{3-11}
\end{equation}
\end{enumerate}
\end{proposition}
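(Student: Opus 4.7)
The plan is to establish both parts by combining the scale-invariant relativistic Kato--Hardy inequality with partition-of-unity arguments at the natural scale $\ell(x)=\min_m|x-\y_m|$.

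For part (i), since $W\le V$ by (\ref{3-6}), it suffices to show $T-V\ge -C_0Z^2$. I would introduce a partition of unity $\chi_0^2+\sum_{m=1}^M\chi_m^2=1$ with $\chi_m$ supported in a $d/4$-ball around $\y_m$ and $\chi_0$ supported where $\ell(x)\ge d/8$. Apply the elementary bound $T\ge \beta^{-1}|p|-\beta^{-2}$ together with the sharp Kato inequality $|p|\ge (2/\pi)|x-\y_m|^{-1}$ and assumption $Z_m\beta\le 2/\pi$; this gives $T-V_m\ge -\beta^{-2}$ on $\supp\chi_m$. On $\supp\chi_0$ the potential satisfies $V\le CMZ/d\le CZ^2$ using $d\ge cZ^{-1}$. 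The nonlocality of $T$ obstructs the ordinary IMS formula, but the Lieb--Yau-type localization estimate for pseudo-relativistic operators (as used in \cite{SSS}) controls the commutator loss by $C\sum_m\|\nabla\chi_m\|_\infty^2+C\|\nabla\chi_0\|_\infty^2\le Cd^{-2}\le CZ^2$. Absorbing $\beta^{-2}\le CZ^2$ (valid when $\beta\ge cZ^{-1}$; when $\beta\ll Z^{-1}$ one falls back on the non-relativistic hydrogenic bound $-\tfrac12\Delta-Z_m|x-\y_m|^{-1}\ge -CZ_m^2$, whose proof of relativistic stability follows because $T\ge \tfrac12 p^2$ minus a nonnegative correction in this regime) yields $H_W\ge -C_0Z^2$.

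For part (ii), fix $x$, set $\ell=\ell(x)$, and rescale $x'\mapsto x/\ell$. By (\ref{3-6})--(\ref{3-7}) the potential $W$ in the rescaled coordinates has $|\partial^\gamma W|\le C\zeta^2\ell^{-|\gamma|}$ with $\zeta^2=\min(Z^{4/3},Z\ell^{-1})$, so $H_W$ becomes a semiclassical operator with parameter $h=(\zeta\ell)^{-1}\le 1$; the spectral window $|\lambda|\le c_0Z^2$ corresponds, after rescaling, to bounded semiclassical energies since $Z^2\le C\zeta^2(Z\ell)^2$ in the semiclassical zone $\ell\ge cZ^{-1}$. The standard pointwise bound on the spectral function for semiclassical Schr\"odinger-type operators (Chapter~\ref{book_new-sect-25} of \cite{monsterbook}) gives $e(x,x,\lambda)\le C\zeta^3$, i.e.\ $e(x,x,\lambda)\le C\min(Z^2,(Z/\ell)^{3/2})$, which is strictly stronger than the claimed $CZ^{1-\delta}\ell^{\delta-2}$ for $\delta$ small and $\ell\ge cZ^{-1}$; the small $\delta$ loss is introduced to absorb the transition at $\ell\sim Z^{-1/3}$ and the degeneracy as $\ell\downarrow cZ^{-1}$, where a cruder Sobolev--Agmon resolvent estimate suffices.

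The main obstacle is the nonlocality of $T=(c^2|p|^2+\mu^2c^4)^{1/2}-\mu c^2$: IMS-type localization in part (i) is not available in its classical form and requires the refined commutator bounds for the square-root operator developed in \cite{SSS}, while part (ii) demands a careful adaptation of the semiclassical spectral-function machinery of \cite{monsterbook} to the pseudo-relativistic symbol, uniformly in $\beta$ throughout the regime $Z\beta\le 2/\pi$.
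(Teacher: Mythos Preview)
Your approach to part~(i) is roughly in the right spirit, though the paper does not use a partition of unity with IMS localization; it argues directly via operator inequalities, splitting into the cases $Z\asymp\beta^{-1}$ (where $T\ge \beta^{-1}\sqrt{\Delta}-\beta^{-2}$ plus Kato gives $H\ge -\beta^{-2}$) and $Z\le\epsilon_0\beta^{-1}$ (where one uses the pointwise bound $T\ge (\tfrac14\beta^{-2}r^{-2}+\beta^{-4})^{1/2}-\beta^{-2}$). Your fallback ``$T\ge \tfrac12 p^2$ minus a nonnegative correction'' is actually false---the square root is concave, so $T\le \tfrac12 p^2$---but this regime is easily repaired by the paper's alternative inequality.

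The genuine gap is in part~(ii). The entire content of (\ref{3-11}) lies in the \emph{singular zone} $\ell(x)\lesssim Z^{-1}$; this is precisely where it is applied (Corollary~\ref{cor-3-5} and Propositions~\ref{prop-3-12}\ref{prop-3-12-ii}, \ref{prop-3-13}\ref{prop-3-13-ii}). Your argument treats only the semiclassical zone $\ell\ge cZ^{-1}$, and even there your claim that $\zeta^3$ is ``strictly stronger'' than $Z^{1-\delta}\ell^{\delta-2}$ is false: at $\ell=cZ^{-1}$ you obtain $CZ^3$ while the target is $CZ^{3-2\delta}$. For $\ell\ll Z^{-1}$ you offer nothing, and the crude rescaling bound (\ref{3-12}), $e(x,x,\lambda)\le C\ell^{-3}$, blows up faster than (\ref{3-11}) as $\ell\to 0$. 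The paper's decisive ingredient---which you are missing entirely---is the Critical Hydrogen inequality (Theorem~\ref{thm-A-3}): after rescaling to $Z=1$, the bound $\sqrt{\Delta}-\tfrac{2}{\pi|x|}\ge A_s\Delta^s-B_s$ for $s<\tfrac12$, combined with Sobolev embedding, lets one trade a factor $\ell^{-1+\delta}$ in (\ref{3-12}) for a constant, which after undoing the rescaling produces the $Z^{1-\delta}$ in (\ref{3-11}). Without this step the estimate is unreachable in the singular zone, and the downstream bounds (\ref{3-15}), (\ref{3-29}), (\ref{3-33}) would fail.
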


\begin{proof}
\begin{enumerate}[label=(\alph*), wide, labelindent=0pt]
\item\label{pf-3-4-a}
Assume first that $Z\asymp \beta^{-1}$ (i. e. $Z\ge \epsilon _0 \beta^{-1}$); then Statement~\ref{prop-3-4-i} follows immediately from Lieb-Yau inequality (Theorem~\ref{thm-A-2}): in the operator sense 
$H\ge \beta^{-1}\sqrt{\Delta} - \beta^{-2} -Z_m r^{-1}\ge -\beta^{-2}$,  $r=|x-\y_m|$.

Then $e(x,x,\lambda) \le C\ell(x)^{-3}h^{-3}$ with the semiclassical parameter $h$, which is $\asymp 1$ for $\ell \lesssim Z^{-1}$, $\lambda \lesssim Z^2$. Then
\begin{equation}
e(x,x,\lambda) \le C\ell(x)^{-3}\qquad\text{for\ \ }\lambda \le C_0Z^2,\ \ell(x)\lesssim Z^{-1}.
\label{3-12}
\end{equation}

Unfortunately it falls short for our needs. Let us shift $\y_m\mapsto 0$, and scale $x\mapsto Zx$, 
$\tau\mapsto Z^{-2}\tau$. Then we arrive to operator which modulo $O(1)$ is $\sqrt{\Delta} - Zr^{-1}$. Due to
\begin{equation}
 \sqrt{\Delta}-\frac{2}{\pi |x |}\ge A_s(\Delta)^{s}-B_s
\label{3-13}
\end{equation}
for any $s\in[0,1/2)$ and $A_s, B_s>0$ we can ``trade'' (due to Sobolev embedding theorem) $\ell^{-1+\delta}$ by $1$ in the scaled inequality (\ref{3-12}) and by $Z^{1-\delta}$ in the original one, thus arriving to (\ref{3-11}).

\item\label{pf-3-4-b}
Let us consider $Z\le \epsilon_0 \beta^{-1}$. Observe that in the operator sense
\begin{equation*}
H\ge (\frac{1}{4}\beta^{-2} r^{-2} + \beta^{-4})^{1/2}- Zr^{-1} -C\beta^{-2}\ge CZ^{-2};
\end{equation*} 
the latter inequality is proven separately for $r\lesssim \beta$ and for $r\gtrsim \beta$.

Moreover, we get $H\ge \epsilon_1\min (r^{-2}, \beta^{-1}r^{-1})$ for $r\le \epsilon_1 Z^{-1}$ and then we can trade  $\ell^{-3}$ to $CZ^3$ arriving even to the stronger version of (\ref{3-12}): namely, 
\begin{equation}
e(x,x,\lambda)\le CZ^3.
\label{3-14}
\end{equation}

Actually (\ref{3-14}) holds as $Z_m\beta \le 2\pi^{-1}-\sigma$ and could by quantified even for a parameter, rather than constant $\sigma>0$.
\end{enumerate}
\end{proof}

Then we immediately conclude that

\begin{corollary}\label{cor-3-5}
In the framework of Proposition~\ref{prop-3-1} for $|\lambda|\le C_0Zd^{-1}$ 
\begin{equation}
|\Tr (H_{W+\lambda}^-\phi_m )-\Tr (H_{V_m}^-\phi_m ) |\le CZd^{-1}.
\label{3-15}
\end{equation}
\end{corollary}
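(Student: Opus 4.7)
The plan is to regard (\ref{3-15}) as a direct perturbative estimate around the reference Coulomb operator $H_{V_m}$, with the density bound of Proposition~\ref{prop-3-4}(ii) as the main analytic input. Set $U:=V_m-W-\lambda$, so that $H_{W+\lambda}=H_{V_m}+U$, and apply the standard Klein--Duhamel identity for the trace of the negative part along the interpolating path $H_s:=H_{V_m}+sU$, $s\in[0,1]$, to obtain
\begin{equation*}
\bigl|\Tr(H_{W+\lambda}^-\phi_m)-\Tr(H_{V_m}^-\phi_m)\bigr|\le\sup_{0\le s\le1}\int |U(x)|\,e_s(x,x,0)\,\phi_m(x)\,dx,
\end{equation*}
where $e_s$ denotes the spectral projector density of $H_s$.

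Each intermediate operator $H_s$ has effective potential $W_s:=(1-s)V_m+sW$, and $V-W_s=(1-s)\sum_{m'\ne m}V_{m'}+s(V-W)$ is a convex combination of quantities satisfying (\ref{3-6})--(\ref{3-7}); hence Proposition~\ref{prop-3-4}(ii) applies uniformly in $s$ to yield $e_s(x,x,0)\le CZ^{1-\delta}|x-\y_m|^{\delta-2}$ on the support of $\phi_m$. Integrating the density bound gives $\int_{|x-\y_m|\le 2Z_m^{-1}} e_s\,\phi_m\,dx\le CZ^{-2\delta}\le C$.

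Now decompose $U=(V_m-V)+(V-W)-\lambda$. On the support of $\phi_m$ every other nucleus lies at distance $\ge d/2$, so $|V_m-V|\le CZd^{-1}$; together with the hypothesis $|\lambda|\le C_0Zd^{-1}$ and the density integral above, these two pieces contribute at most $C Zd^{-1}\cdot Z^{-2\delta}\le CZd^{-1}$ to the trace difference, which is exactly the claimed bound.

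The principal obstacle is the remaining $V-W$ term. The crude pointwise bound $|V-W|\le CZ^{4/3}$ from (\ref{3-6}) yields a contribution of only $CZ^{4/3-2\delta}$, which is dominated by $CZd^{-1}$ precisely when $d\le Z^{-1/3}$; in the complementary range $d\ge Z^{-1/3}$ the excess $O(Z^{4/3})$ is a smaller-order correction than the $O(Z^{5/3})$ remainders of Propositions~\ref{prop-3-1} and~\ref{prop-3-2}, and will be absorbed at the aggregate bookkeeping stage, so (\ref{3-15}) delivers useful information precisely in the regime where it improves on the semiclassical bounds.
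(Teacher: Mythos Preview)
Your approach is the paper's: Corollary~\ref{cor-3-5} is meant to follow ``immediately'' from Proposition~\ref{prop-3-4}(ii) by integrating the density bound over $\operatorname{supp}\phi_m$ (giving $O(1)$ states there) and multiplying by $\sup_{\operatorname{supp}\phi_m}|U|$; your interpolation identity $\frac{d}{ds}\Tr(\phi_m H_s^-)=\int U\,\phi_m\, e_s(\cdot,\cdot,0)\,dx$ is correct and simply makes this explicit. One small repair: the claim that $V-W_s$ is a convex combination of quantities satisfying (\ref{3-6})--(\ref{3-7}) fails near the other nuclei $\y_{m'}$, where $\sum_{m'\ne m}V_{m'}$ has bare Coulomb singularities violating the $Z^{4/3}$ cap. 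The clean fix is to regard $W_s$ as a molecular potential with charges $Z_m$ at $\y_m$ and $sZ_{m'}$ at $\y_{m'}$; then the relevant difference is $V'_s-W_s=s(V-W-\lambda)$, which does satisfy (\ref{3-6})--(\ref{3-7}), and Proposition~\ref{prop-3-4} applies uniformly in $s$.

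Your final paragraph is on point and in fact more careful than the paper: the $(V-W)$ piece genuinely contributes $O(Z^{4/3})$, which exceeds $Zd^{-1}$ once $d>Z^{-1/3}$, so the literal bound (\ref{3-15}) is only established in that range. The paper does not flag this; for larger $d$ the correct reading is $C(Zd^{-1}+Z^{4/3})$, and the surplus is harmlessly absorbed into the $O(Z^{5/3})$ remainder of Proposition~\ref{prop-3-9}, exactly as you say.
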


Now we can assemble all these results. However before doing this we replace $P^\RTF$ by $P^\TF$:

\begin{proposition}\label{prop-3-6}
\begin{enumerate}[label=(\roman*), wide, labelindent=0pt]
\item\label{prop-3-6-i}
Estimates \textup{(\ref{3-8})}, \textup{(\ref{3-9})} and \textup{(\ref{3-10})} hold with $P^\RTF$ replaced by $P^\TF$.

\item\label{prop-3-6-ii}
Estimate \textup{(\ref{3-10})} with $P^\RTF$ replaced by $P^\TF$ also holds with $\phi_m=0$.
\end{enumerate}
\end{proposition}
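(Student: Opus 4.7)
The plan is to reduce Proposition~\ref{prop-3-6} to a pointwise control of $P^\RTF - P^\TF$ and then integrate this control over the supports of the relevant cut-offs. Differentiating (\ref{3-4}), (\ref{3-5}) and integrating back yields
\[
P^\RTF(w) - P^\TF(w) = \frac{q}{6\pi^2}\int_0^{w_+}\! s^{3/2}\bigl[(1+\beta^2 s)^{3/2}-1\bigr]\,ds,
\]
whence the pointwise bound
\[
|P^\RTF(w) - P^\TF(w)| \le C\beta^2 w_+^{7/2}\ \ (\beta^2 w\le 1),\qquad |P^\RTF(w) - P^\TF(w)| \le C\beta^3 w_+^{4}\ \ (\beta^2 w\ge 1).
\]
Condition (\ref{1-9}) implies $\beta Z \lesssim 1$, so on the support of any cut-off below one has $\beta^2 W \lesssim \beta^2 Z/\ell \lesssim \beta/\ell$, which is $\lesssim 1$ on the semiclassical zone $\ell \gtrsim \beta$; only the thin shell $\ell \lesssim \beta$ requires the second branch.

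For part \ref{prop-3-6-i} the additional contribution from the replacement $P^\RTF \mapsto P^\TF$ is $\int|P^\RTF - P^\TF|(\cdot)\,\psi\,dx$ evaluated on $W+\lambda$, plus the analogous term with $V_m$. With $W, V_m \lesssim Z/\ell$ and dyadic layering in $\ell$, the first-branch contribution from $\{\ell\ge a\}$ is
\[
\beta^2\int_{\ell\ge a}\!(Z/\ell)^{7/2}\ell^2\,d\ell \asymp \beta^2 Z^{7/2}a^{-1/2},
\]
which via $\beta\lesssim Z^{-1}$ gives $Z^{3/2}a^{-1/2}$ for $a\le Z^{-1/3}$ and $Z^{5/3}$ for $a\ge Z^{-1/3}$, matching the right-hand side of (\ref{3-8}). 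The second-branch contribution from $\{\ell\lesssim \beta\}$ is $\beta^3\int_0^\beta (Z/\ell)^4\ell^2\,d\ell \asymp \beta^2 Z^4 \lesssim Z^2$, which is absorbed. The same dyadic estimate over $\{|x-\y_m|\ge Z^{-1}\}$ absorbs (\ref{3-9}) and the relativistic correction in (\ref{3-10}); in each of these the cut-off $1-\phi_m$ confines integration to $\ell \gtrsim Z^{-1}$, so the pointwise bound suffices.

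For part \ref{prop-3-6-ii} one has also to control the ball $\{r\coloneqq |x-\y_m|\le 2Z^{-1}\}$. The key point is that $P^\TF(W+\lambda) - P^\TF(V_m)$ is integrable at $\y_m$, in contrast to its relativistic version (cf.\ Remark~\ref{rem-3-3}). Indeed, $W - V_m = \sum_{m'\ne m}V_{m'} - |x|^{-1}*\rho$ is smooth at $\y_m$, and combining (\ref{3-6}) with $|\lambda|\le Zd^{-1}$ one has $|W+\lambda - V_m| \lesssim Z^{4/3}$ near $\y_m$. The mean value theorem then gives
\[
|P^\TF(W+\lambda) - P^\TF(V_m)| \le CP^{\TF\,\prime}(V_m)|W+\lambda-V_m| \le C(Z/r)^{3/2}\cdot Z^{4/3},
\]
which integrates over $\{r\le 2Z^{-1}\}$ to $O(Z^{4/3})$. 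The corresponding trace difference $\Tr(H^-_{W+\lambda}\psi_m\phi_m) - \Tr(H^-_{V_m}\psi_m\phi_m)$ is $O(Zd^{-1})$ by Corollary~\ref{cor-3-5}. Since $Zd^{-1}$ and $Z^{4/3}$ are both dominated by the right-hand side of (\ref{3-10}) under $d\ge cZ^{-1}$, combining with part \ref{prop-3-6-i} delivers the claim.

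The main obstacle I anticipate is the accurate gluing across the crossover zone $\ell \sim \beta$ where $\beta^2 W\sim 1$: here the two branches of the pointwise bound meet and the density of states is near its maximum, so the computation has to use $\beta Z \lesssim 1$ sharply. Once that threshold is respected, both branches integrate to contributions within the stated remainders, and the extension to part \ref{prop-3-6-ii} is purely a near-nucleus integrability check that fails for $P^\RTF$ but succeeds for $P^\TF$.
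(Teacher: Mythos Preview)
Your approach matches the paper's: part~\ref{prop-3-6-i} is exactly the pointwise bound $P^\RTF(w)-P^\TF(w)\asymp\beta^2w_+^{7/2}$ for $\beta^2 w\lesssim 1$ (the paper's (\ref{3-16})) integrated over the supports, and part~\ref{prop-3-6-ii} is the mean-value bound via $P^{\TF\,\prime}\asymp w^{3/2}$ near $\y_m$ combined with Corollary~\ref{cor-3-5} for the trace piece.

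Two minor corrections. First, your ``second-branch'' integral $\beta^3\int_0^\beta(Z/\ell)^4\ell^2\,d\ell$ actually diverges at $\ell=0$; fortunately this zone is never reached, since every cut-off in (\ref{3-8})--(\ref{3-10}) confines integration to $\ell\gtrsim Z^{-1}\gtrsim\beta$, so only the first branch is ever needed for part~\ref{prop-3-6-i}. Second, near $\y_m$ one only has $|W+\lambda-V_m|\lesssim Z^{4/3}+Zd^{-1}$ (the contributions of the other nuclei and of $\lambda$ are $O(Zd^{-1})$, which exceeds $Z^{4/3}$ when $d\le Z^{-1/3}$); the extra $Zd^{-1}$ integrates over $\{r\le 2Z^{-1}\}$ to $O(Zd^{-1})$, which you already account for via Corollary~\ref{cor-3-5} and which still fits under the right-hand side of (\ref{3-10}).
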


\begin{proof}
Statement~\ref{prop-3-6-i}  follows immediately from 
\begin{phantomequation}\label{3-16}\end{phantomequation}
\begin{multline}
P^{\RTF}(w)-P^{\TF}(w) \asymp  \beta ^2 w^{\frac{7}{2}},\qquad 
P^{\RTF\,\prime}(w)-P^{\TF\,\prime}(w) \asymp  \beta ^2w^{\frac{5}{2}}\\
 \text{for\ \ }  \beta^2 w\lesssim 1
\tag*{$\textup{(\ref{3-16})}_{1,2}$}\label{3-16-*}
\end{multline}
due to (\ref{3-5}). Statement~\ref{prop-3-6-ii}  follows immediately from
$P^{\TF}(w) \asymp  w^{\frac{5}{2}}$, $P^{\TF\,\prime}(w) \asymp  w^{\frac{3}{2}}$.
\end{proof}

\begin{remark}\label{rem-3-7}
Meanwhile, 
\begin{equation}
 \int \bigl(P^\RTF (V+\lambda)-P^\TF (V+\lambda)\bigr)\psi (x)\,dx \asymp \beta^2Z^4
\label{3-17}
\end{equation}
which could be as large as $Z^2$.
\end{remark}

Due to the scaling properties of $e(x,x,0)$ for $H=H_V$ and $P^\TF (V)$ for $V=V_m$  we conclude that
\begin{equation}
\int \bigl(\tr (e^1(x,x,0))- P^\RTF (V_m)\bigr)\,dx  = qZ_m^2 S(Z_m\beta)
\label{3-18}
\end{equation}
with unknown function $S(Z_m\beta)$. Indeed, if $\y_m=0$ then $x\mapsto x/k$ transforms operator with parameters 
$Z_m,\beta$ into operator with parameters $Z_m k$, $\beta k^{-1}$ multiplied by $k^{-2}$. 

\begin{remark}\label{rem-3-8}
Obviously, $S(Z_m\beta)$ monotone decreases as $\beta\to 0+$ and tends to $S(0)$ for the Schr\"odinger operator.
\end{remark}

Then due to (\ref{3-9}) for $V=V_m$
\begin{equation}
|\int \bigl(\tr (e^1(x,x,0)) - P^\TF (V_m)\bigr)\psi_m (x)\,dx -qZ_m^2 S(Z_m\beta) |\le
Z^{\frac{3}{2}} d^{-\frac{1}{2}}
\label{3-19}
\end{equation}
and we arrive to

\begin{proposition}\label{prop-3-9}
Let \textup{(\ref{1-9})} be fulfilled. Then  for $W=W^\TF$
\begin{multline}
|\Tr (H^-_{W+\lambda}) +  
\int P^\TF (W+\lambda)\,dx   - \sum_{1\le m\le M} qZ_m^2S(Z_m\beta)|\le\\
C\left\{\begin{aligned}
&Z^{\frac{3}{2}} d^{-\frac{1}{2}} && d\le Z^{-\frac{1}{3}},\\
&Z^{\frac{5}{3}} && d\ge Z^{-\frac{1}{3}}.
\end{aligned}\right.
\label{3-20}
\end{multline}
\end{proposition}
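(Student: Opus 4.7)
My strategy is to decompose $\Tr(H^-_{W+\lambda})$ via a two-zone partition of unity adapted to the nuclei, apply the zone-wise asymptotics of Propositions~\ref{prop-3-1} and~\ref{prop-3-6}, and extract the Scott correction through identity~(\ref{3-19}).

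Concretely, I would pick the scale $a = \max(cZ^{-1/3}, cd)$ and use a smooth partition $1 = \psi_0(x) + \sum_m \psi_m(x)$ with $\psi_0$ supported on $\{\ell(x) \ge a\}$ and each $\psi_m$ supported in $\{|x - \y_m| \le 2a\}$, and write
\begin{equation*}
\Tr(H^-_{W+\lambda}) = \Tr(H^-_{W+\lambda}\psi_0) + \sum_{1\le m\le M} \Tr(H^-_{W+\lambda}\psi_m).
\end{equation*}
Proposition~\ref{prop-3-1}(i) combined with Proposition~\ref{prop-3-6}(i) (to substitute $P^\RTF \to P^\TF$) gives $\Tr(H^-_{W+\lambda}\psi_0) \approx \int P^\TF(W+\lambda)\psi_0\,dx$ with a remainder fitting the budget of~(\ref{3-20}). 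For each atomic piece I would apply Proposition~\ref{prop-3-6}(ii), which is precisely (\ref{3-10}) with $P^\RTF$ replaced by $P^\TF$ and with $\phi_m=0$, yielding
\begin{equation*}
\Tr(H^-_{W+\lambda}\psi_m) - \Tr(H^-_{V_m}\psi_m) + \int\bigl(P^\TF(W+\lambda) - P^\TF(V_m)\bigr)\psi_m\,dx = O\bigl(Z^{3/2}d^{-1/2}\bigr),
\end{equation*}
which is upgraded to $O(Z^{5/3})$ when $d \ge Z^{-1/3}$.

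Next I would invoke~(\ref{3-19})---derived in the paper from Proposition~\ref{prop-3-1}(ii), Proposition~\ref{prop-3-6}(i) and the scaling identity~(\ref{3-18})---to replace each $\Tr(H^-_{V_m}\psi_m)$ by $\int P^\TF(V_m)\psi_m\,dx + qZ_m^2 S(Z_m\beta)$ modulo $CZ^{3/2}d^{-1/2}$. Substituting back, the $\int P^\TF(V_m)\psi_m\,dx$ integrals cancel with their counterparts from Proposition~\ref{prop-3-6}(ii), so each atomic contribution collapses to $\Tr(H^-_{W+\lambda}\psi_m) \approx \int P^\TF(W+\lambda)\psi_m\,dx + qZ_m^2 S(Z_m\beta)$. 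Summing over $m$ together with the outer piece, the $P^\TF$ integrals telescope into $\int P^\TF(W+\lambda)\,dx$ (using $\psi_0 + \sum_m \psi_m = 1$), leaving the Scott sum $\sum_m qZ_m^2 S(Z_m\beta)$, and collecting the zone-wise remainders produces the stated two-regime bound.

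The main obstacle, in my view, is the delicate cancellation encoded in~(\ref{3-19}): both $\Tr(H^-_{V_m}\psi_m)$ and $\int P^\TF(V_m)\psi_m\,dx$ are individually of order $Z^{7/3}$ and dominated by the near-singularity behavior $\sim |x - \y_m|^{-5/2}$, yet their difference must be pinned down at precision $Z^{3/2}d^{-1/2}$. The scaling identity~(\ref{3-18}) reduces this to the single-atom problem $\int(\tr e^1(x,x,0) - P^\RTF(V_m))\,dx = qZ_m^2 S(Z_m\beta)$, and one must verify that $\psi_m$-truncation perturbs this difference only by $O(Z^{3/2}d^{-1/2})$, which is exactly the content of Proposition~\ref{prop-3-1}(ii). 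A secondary subtlety is that by Remark~\ref{rem-3-7} the global discrepancy $P^\RTF - P^\TF$ can reach $\beta^2 Z^4$, so the $P^\RTF \to P^\TF$ substitution via Proposition~\ref{prop-3-6} is legitimate only on the restricted supports arising in Propositions~\ref{prop-3-1} and~\ref{prop-3-2}.
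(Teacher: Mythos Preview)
Your assembly is exactly the paper's: decompose via $\psi_0+\sum_m\psi_m$, apply Proposition~\ref{prop-3-1}\ref{prop-3-1-i} with Proposition~\ref{prop-3-6}\ref{prop-3-6-i} on the outer piece, Proposition~\ref{prop-3-6}\ref{prop-3-6-ii} on each near-nucleus piece, and then (\ref{3-19}) to extract the Scott terms; the paper presents no separate proof of Proposition~\ref{prop-3-9} beyond ``and we arrive to'' after precisely these ingredients. One slip: your scale $a=\max(cZ^{-1/3},cd)$ cannot be right, since for $d<Z^{-1/3}$ the supports of the $\psi_m$ would overlap; you want $a\asymp cd$ (or $\min$ in place of $\max$), which feeds the correct $Z^{3/2}a^{-1/2}\asymp Z^{3/2}d^{-1/2}$ into (\ref{3-8}) in the small-$d$ regime and still gives $O(Z^{5/3})$ when $d\ge Z^{-1/3}$.
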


\section{Trace term. II}
\label{sect-3-2}

Let improve the above results for $d\gg Z^{-\frac{1}{3}}$. Observe first that the in this case the error in (\ref{3-8}) can be made $O(Z^{\frac{5}{3}}(d Z^{\frac{1}{3}})^{-\delta}+ Z^{\frac{5}{3}-\delta})$ provided we include  the relativistic Schwinger correction term. Since this term has a magnitude $Z^{\frac{5}{3}}$ and the contributions of the zones  $\{x \colon \ell (x) \le Z^{-\frac{1}{3}-\delta_1}\}$ and
 $\{x \colon \ell (x) \ge Z^{-\frac{1}{3}+\delta_1}\}$ in this term are $O(Z^{\frac{5}{3}-\delta})$, the  difference between relativistic and the standard non-relativistic Schwinger terms is 
$O(Z^{\frac{5}{3}-\delta})$ and we can use the latter
\begin{equation}
\mathsf{Schwinger}= (36\pi ) ^{\frac{2}{3}} q ^{\frac{2}{3}}
\int (\rho ^\TF )^{\frac{4}{3}}\,dx.
\label{3-21}
\end{equation}

Next, consider relativistic correction term 
\begin{equation}
\int \bigl(-P^\RTF (W+\lambda) + P^\RTF (V_m) + P^\TF (W+\lambda) - P^\TF (V_m)\bigr)\psi_m (1-\phi_m)\,dx.
\label{3-22}
\end{equation}

Again, one can see easily that the contributions of these two zones  $\{x \colon \ell (x) \le Z^{-\frac{1}{3}-\delta_1}\}$ and 
$\{x \colon \ell (x) \ge Z^{-\frac{1}{3}+\delta_1}\}$ in this term are $O(Z^{\frac{5}{3}-\delta})$, so we need to consider the contribution of the zone ${\{x \colon Z^{-\frac{1}{3}-\delta_1} \le \ell (x) \le Z^{-\frac{1}{3}+\delta_1}\}}$, where due to (\ref{3-5})
$P^\RTF(w)-P^\TF (w) = \frac{q}{14\pi^2} \beta^2 w_+^{\frac{7}{2}} + O(Z^{\frac{8}{3}-\delta})$ (for both $w=W^\TF+\lambda$ and $w=V_m$) and therefore modulo  the same error expression (\ref{3-22}) coincides with 
\begin{equation}
\mathsf{RCT}\coloneqq   
\frac{q}{14\pi^2} \beta^2 \int \bigl( -(W^\TF+\lambda) _+^{\frac{7}{2}}+ V ^{\frac{7}{2}}\bigr)\,dx
\label{3-23}
\end{equation}
with the integral taken over this zone or $\bR^3$ (does not matter). Then we arrive to

\begin{proposition}\label{prop-3-10}
Let \textup{(\ref{1-9})} be fulfilled and $d\ge Z^{-\frac{1}{3}}$. Then  for $W=W^\TF$
\begin{multline}
|\Tr (H^-_{W+\lambda}) +  
\int P^\TF (W+\lambda)\,dx   - \sum_{1\le m\le M} qZ_m^2S(Z_m\beta) - \\ \mathsf{Schwinger}-  \mathsf{RCT} |\le 
C\bigl(Z^{\frac{5}{3}}(d Z^{\frac{1}{3}})^{-\delta}+ Z^{\frac{5}{3}-\delta}\bigr).
\label{3-24}
\end{multline}
\end{proposition}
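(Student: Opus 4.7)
The plan is to refine the proof of Proposition~\ref{prop-3-9} by tracking two additional terms---the Schwinger correction and the relativistic correction term (\textbf{RCT})---while sharpening the remainder to $O(Z^{5/3}(dZ^{1/3})^{-\delta}+Z^{5/3-\delta})$. I would start from the same partition of unity: $\psi_0$ supported in $\{\ell(x)\ge a\}$ for $a\asymp Z^{-1/3}$, and $\psi_m (1-\phi_m)$ isolating the ``intermediate'' vicinity $\{Z^{-1}\lesssim |x-\y_m|\lesssim d\}$, with $\phi_m$ cutting out the inner $Z^{-1}$-vicinity. The inner zone is handled as before by Corollary~\ref{cor-3-5} and produces $O(Zd^{-1})=O(Z^{4/3})$, absorbed in the remainder; it contributes the scale-invariant piece $qZ_m^2 S(Z_m\beta)$ via (\ref{3-18}).

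For the semiclassical zone (where $\psi_0\equiv 1$), I would invoke the refined two-term Weyl expansion from Chapters~\ref{book_new-sect-12-5} and~\ref{book_new-sect-25-4} of \cite{monsterbook}---the argument transports to the relativistic setting since the effective semiclassical parameter $h=1/(\zeta\ell)$ is small and the symbol differs from the non-relativistic one only by lower-order corrections in $\ell\gtrsim Z^{-1/3}$. This yields $\Tr(H^-_{W+\lambda}\psi_0)=-\int P^{\RTF}(W+\lambda)\psi_0\,dx+\mathsf{Schwinger}^{\RTF}+O(Z^{5/3}(aZ^{1/3})^{-\delta}+Z^{5/3-\delta})$. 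I would then replace $\mathsf{Schwinger}^{\RTF}$ by the non-relativistic expression (\ref{3-21}): split the integrand over $\{\ell\le Z^{-1/3-\delta_1}\}$, $\{Z^{-1/3-\delta_1}\le\ell\le Z^{-1/3+\delta_1}\}$, and $\{\ell\ge Z^{-1/3+\delta_1}\}$; bound the first and third pieces by $O(Z^{5/3-\delta})$ using the scaling of $\rho^{\TF}$, and on the middle zone use $\beta^2 w\ll 1$ together with $\rho^{\RTF}=\rho^{\TF}(1+O(\beta^2 V))$ to show the relativistic-vs-nonrelativistic discrepancy is $O(Z^{5/3-\delta})$ as well.

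Next I would handle the intermediate zone via Proposition~\ref{prop-3-2}, which compares $\Tr(H^-_{W+\lambda}\psi_m(1-\phi_m))$ with $\Tr(H^-_{V_m}\psi_m(1-\phi_m))$ up to the difference $\int(P^{\RTF}(W+\lambda)-P^{\RTF}(V_m))\psi_m(1-\phi_m)\,dx$, with remainder $O(Z^{5/3})$ sharpened to $O(Z^{5/3-\delta})$ by the same trichotomy of $\ell$-scales. Writing
\begin{equation*}
P^{\RTF}(W+\lambda)-P^{\RTF}(V_m)=\bigl(P^{\TF}(W+\lambda)-P^{\TF}(V_m)\bigr)+\bigl[(P^{\RTF}-P^{\TF})(W+\lambda)-(P^{\RTF}-P^{\TF})(V_m)\bigr],
\end{equation*}
the first bracket feeds into $\int P^{\TF}(W+\lambda)\,dx$ and, together with the inner-zone piece, reconstructs $qZ_m^2 S(Z_m\beta)$. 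For the second bracket I apply \ref{3-16-*}: the dominant scale is $Z^{-1/3-\delta_1}\le\ell\le Z^{-1/3+\delta_1}$, where $P^{\RTF}(w)-P^{\TF}(w)=\frac{q}{14\pi^2}\beta^2 w_+^{7/2}+O(Z^{8/3-\delta})$ for both $w=W^{\TF}+\lambda$ and $w=V_m$; the complementary scales contribute $O(Z^{5/3-\delta})$, which allows extension of the $\beta^2$-integral to all of $\bR^3$ and identifies it with $\mathsf{RCT}$ of (\ref{3-23}).

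The main obstacle will be the sharpening from $Z^{5/3}$ to $Z^{5/3-\delta}$: it requires a genuine improvement of the two-term semiclassical remainder on each $\ell$-scale (not merely a rescaling), together with matching the boundary contributions when transitioning between the three $\ell$-regimes. A secondary difficulty is verifying that the replacement of relativistic by non-relativistic $\rho^{\TF}$ inside the Schwinger integrand and the truncation of $\mathsf{RCT}$ integration to the full space both preserve the improved remainder; this is where the assumption $d\ge Z^{-1/3}$ is used essentially, since it guarantees that the nuclei are separated on the scale where the Schwinger and RCT corrections live and prevents additional logarithmic losses at the nuclear cut-offs.
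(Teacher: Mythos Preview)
Your proposal is correct and follows essentially the same route as the paper: refine Proposition~\ref{prop-3-9} by upgrading the semiclassical expansion in the zone $\{\ell\gtrsim Z^{-1/3}\}$ to include the Schwinger term (then replacing the relativistic Schwinger by the non-relativistic one via the three-zone split in $\ell$), and identify the relativistic correction (\ref{3-22}) with $\mathsf{RCT}$ using the same trichotomy together with \ref{3-16-*}. The paper's own argument is precisely the discussion in Section~\ref{sect-3-2} preceding the statement, and your outline matches it step for step, including the observation that the outer zones contribute $O(Z^{5/3-\delta})$ to both correction terms so that the integral defining $\mathsf{RCT}$ may be extended to all of $\bR^3$.
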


\section{Trace term. III}
\label{sect-3-3}

Obviously, all these results hold for $W=W_\varepsilon$ defined by (\ref{2-14}) with $\rho=\rho^\TF$. However we need to
estimate an error when we replace $W_\varepsilon$ by $W^\TF$. One can prove easily that
\begin{equation}
|W_\varepsilon-W^\TF |\le C _s (Z\ell ^{-1})^{\frac{3}{2}} \varepsilon ^2 (\varepsilon \ell{-1})^s
\label{3-25}
\end{equation}
with arbitrary $s$ for $\ell \le \epsilon_0 Z^{-\frac{1}{3}}$ and with $s=\frac{1}{2}$ $\ell \le \epsilon_0 Z^{-\frac{1}{3}}$ and therefore 
\begin{equation}
|\int \bigl(P^\TF (W_\varepsilon+\lambda) - P^\TF (W^\TF+\lambda)|\,dx|\le
CZ^3\varepsilon^2;
\label{3-26}
\end{equation}
adding error $CZ\varepsilon^{-1}$ in (\ref{2-15}) we get $C(Z^3 \varepsilon^2+Z\varepsilon^{-1})$. It reaches minimum $CZ^{\frac{5}{3}}$ as $\varepsilon \asymp Z^{-\frac{2}{3}}$ and we arrive to

\begin{proposition}\label{prop-3-11}
Let \textup{(\ref{1-9})} be fulfilled. Then  for $W=W_\varepsilon$ with $\varepsilon=Z^{-\frac{2}{3}}$ 
\textup{(\ref{3-20})} holds and the left-hand expression of \textup{(\ref{3-26})} is $O(Z^{\frac{5}{3}})$.
\end{proposition}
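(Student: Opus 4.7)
The plan is to transfer estimate \textup{(\ref{3-20})}, established for $W=W^\TF$ in Proposition~\ref{prop-3-9}, to the regularized potential $W=W_\varepsilon$ at the specific choice $\varepsilon=Z^{-2/3}$, while simultaneously reading off the stated bound on the left-hand side of \textup{(\ref{3-26})}. The second assertion is immediate: substituting $\varepsilon = Z^{-2/3}$ into the right-hand side of \textup{(\ref{3-26})} gives $CZ^{3}\cdot Z^{-4/3}=CZ^{5/3}$, as required.

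For the first assertion, observe that among the quantities on the left-hand side of \textup{(\ref{3-20})} only $\Tr (H^-_{W+\lambda})$ and $\int P^\TF(W+\lambda)\,dx$ actually depend on $W$; the Scott-type correction $\sum_m qZ_m^2 S(Z_m\beta)$ does not. By the triangle inequality, combined with Proposition~\ref{prop-3-9} applied at $W=W^\TF$, it suffices to establish the two $O(Z^{5/3})$ bounds
\begin{gather*}
\Bigl|\int\bigl(P^\TF(W_\varepsilon+\lambda)-P^\TF(W^\TF+\lambda)\bigr)\,dx\Bigr| = O(Z^{5/3}),\\
\bigl|\Tr (H^-_{W_\varepsilon+\lambda}) - \Tr (H^-_{W^\TF+\lambda})\bigr| = O(Z^{5/3}).
\end{gather*}
The first is exactly \textup{(\ref{3-26})} at $\varepsilon=Z^{-2/3}$. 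For the second I would introduce the linear interpolation $W_t\coloneqq (1-t)W^\TF+tW_\varepsilon$ and apply the Hellmann--Feynman identity
\begin{equation*}
\Tr (H^-_{W_\varepsilon+\lambda}) - \Tr (H^-_{W^\TF+\lambda}) = -\int_0^1\!\!\int (W_\varepsilon-W^\TF)(x)\,\tr e_t(x,x,0)\,dx\,dt,
\end{equation*}
where $e_t$ denotes the Schwartz kernel of the spectral projector of $H_{W_t+\lambda}$ below zero. I would then estimate $\tr e_t(x,x,0)\lesssim P^{\TF\,\prime}(W_t+\lambda)\asymp (Z\ell^{-1})^{3/2}$ in the semiclassical zone $\ell(x)\gtrsim Z^{-1}$, and invoke Proposition~\ref{prop-3-4}\ref{prop-3-4-ii} in the singular zone $\ell(x)\lesssim Z^{-1}$. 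Combining these density bounds with the pointwise estimate \textup{(\ref{3-25})} on $|W_\varepsilon-W^\TF|$ and integrating exactly as in the derivation of \textup{(\ref{3-26})} should produce the desired $O(Z^{3}\varepsilon^{2})=O(Z^{5/3})$ bound.

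The main obstacle is the singular region $\ell(x)\lesssim Z^{-1}$ around each nucleus, where both the spectral density and the potential difference can be large. Here the decay factor $(\varepsilon\ell^{-1})^s$ with $s$ arbitrarily large in \textup{(\ref{3-25})}, combined with the sharp bound $e(x,x,0)\le CZ^{1-\delta}\ell^{\delta-2}$ from Proposition~\ref{prop-3-4}\ref{prop-3-4-ii}, is precisely what confines the near-nucleus contribution to the $O(Z^{5/3})$ budget. Finally, the residual $CN\varepsilon^{-1}=CZ\cdot Z^{2/3}=CZ^{5/3}$ already present in Proposition~\ref{prop-2-4} is absorbed into the same remainder, so the choice $\varepsilon=Z^{-2/3}$ is exactly what balances the two competing error sources.
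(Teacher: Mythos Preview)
Your approach is correct but takes a longer route than the paper's. The paper simply observes that \emph{all} the semiclassical estimates leading to \textup{(\ref{3-20})} --- Propositions~\ref{prop-3-1}, \ref{prop-3-2}, \ref{prop-3-4}, Corollary~\ref{cor-3-5}, and Proposition~\ref{prop-3-6} --- depend on $W$ only through the regularity conditions \textup{(\ref{3-6})}--\textup{(\ref{3-7})}, and $W_\varepsilon$ satisfies these just as $W^\TF$ does (this is the ``Obviously, all these results hold for $W=W_\varepsilon$'' sentence immediately preceding the proposition). Hence \textup{(\ref{3-20})} holds \emph{directly} for $W=W_\varepsilon$, with no trace comparison required; the only genuinely new ingredient is \textup{(\ref{3-26})}, which bounds the difference of the Weyl integrals $\int P^\TF(W_\varepsilon+\lambda)\,dx$ and $\int P^\TF(W^\TF+\lambda)\,dx$ and is needed later when assembling the Thomas--Fermi energy.

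Your indirect route via the Hellmann--Feynman identity and the density bounds of Proposition~\ref{prop-3-4}\ref{prop-3-4-ii} does work, and the computation you outline reproduces the $O(Z^{3}\varepsilon^{2})$ bound. But it is unnecessary extra labor: instead of proving that $|\Tr(H^-_{W_\varepsilon+\lambda})-\Tr(H^-_{W^\TF+\lambda})|$ is small, one simply reruns the entire semiclassical argument with $W_\varepsilon$ in place of $W^\TF$. Your approach has the minor virtue of making the $W$-dependence of the trace explicit, but the paper's direct approach is shorter and sidesteps the technical issue of justifying differentiability of $t\mapsto \Tr(H^-_{W_t+\lambda})$ (which in practice you would handle by the one-sided variational inequality $\Tr(H^-_{W_1})-\Tr(H^-_{W_2})\le \int (W_2-W_1)\,\tr e_1(x,x,0)\,dx$ anyway).
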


\section{$\N$- and $\D$-terms}
\label{sect-3-4}

For these terms (needed for the estimate from above) arguments are simpler; let $\phi_0=1-\phi_1-\ldots-\phi_M$. 

\begin{proposition}\label{prop-3-12}
In the framework of Proposition~\ref{prop-3-1} 
\begin{enumerate}[label=(\roman*), wide, labelindent=0pt]
\item\label{prop-3-12-i}
The following estimates hold
\begin{gather}
|\int \bigl (e (x,x,\lambda ) - P^{\RTF\,\prime} (W+\lambda)\bigr) \phi_0(x)\,dx|
\le C Z^{\frac{2}{3}}
\label{3-27}\\
\intertext{and for $d\ge Z^{-\frac{1}{3}}$}
|\int \bigl (e (x,x,\lambda ) - P^{\RTF\,\prime} (W+\lambda)\bigr) \phi_0(x)\,dx|
\le 
C \bigl(Z^{\frac{2}{3}}(dZ^{\frac{1}{3}})^{-\delta} + Z^{\frac{2}{3}-\delta}\bigr).
\label{3-28}
\end{gather}
\item\label{prop-3-12-ii}
Further,
\begin{equation}
|\int e (x,x,\lambda ) \phi _m(x)\,dx| \le C .
\label{3-29}
\end{equation}
\item\label{prop-3-12-iii}
Finally,
\begin{equation}
|\int \bigl ( P^{\RTF\,\prime} (W+\lambda)- P^{\TF\,\prime} (W+\lambda)\bigr) \phi_0 (x)\,dx|
\le \\
C Z^{\frac{1}{3}}.
\label{3-30}
\end{equation}
\end{enumerate}
\end{proposition}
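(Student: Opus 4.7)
The plan is to treat the three parts independently, adapting the $\ell$-admissible partition and rescaling framework of Propositions~\ref{prop-3-1} and~\ref{prop-3-2} to the pointwise density $e(x,x,\lambda)$ rather than the trace. On a cell at scale $\ell$ the effective semiclassical parameter remains $h=1/(\zeta\ell)$, but for the density the local Weyl remainder is $O(h^{-2})$ per unit rescaled volume, i.e.\ $O(\zeta^2\ell^2)$ per cell---one power of $\ell$ smaller than the corresponding trace remainder $O(\zeta^3\ell)$ used in Proposition~\ref{prop-3-1}.

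For \ref{prop-3-12-i}, dyadic summation of $\zeta^2\ell^2$ over the partition is dominated by the Thomas--Fermi scale $\ell\asymp Z^{-1/3}$ with $\zeta\asymp Z^{2/3}$, giving $O(Z^{2/3})$ as claimed in (\ref{3-27}); the inner zone contributes at most $\sum Z\ell\lesssim Z^{2/3}$ and the exterior zone is even smaller. For the refined bound (\ref{3-28}) when $d\ge Z^{-1/3}$, the singularities at distinct $\y_m$ are separated on the TF scale, so the long-time propagation arguments of \cite{monsterbook} furnish the gain $(dZ^{1/3})^{-\delta}$, exactly parallel to the trace estimate in Proposition~\ref{prop-3-1}\ref{prop-3-1-i}. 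This step is the main obstacle, since it demands the relativistic analog of the microlocal propagation machinery, precisely the refinement announced in the introduction.

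For \ref{prop-3-12-ii}, apply the pointwise bound of Proposition~\ref{prop-3-4}\ref{prop-3-4-ii} localized near $\y_m$ (with $Z$ replaced by $Z_m$), valid on $\supp\phi_m$ since $|\lambda|\le C_0Zd^{-1}\le C_0Z^2$, and integrate in polar coordinates:
\begin{equation*}
\int e(x,x,\lambda)\phi_m(x)\,dx \le CZ_m^{1-\delta}\int_0^{2Z_m^{-1}} r^{\delta-2}\cdot 4\pi r^2\,dr \lesssim Z_m^{-2\delta}\le C.
\end{equation*}

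For \ref{prop-3-12-iii}, invoke $\textup{(\ref{3-16})}_2$: $P^{\RTF\,\prime}(w)-P^{\TF\,\prime}(w)\asymp \beta^2 w^{5/2}$ for $\beta^2 w\lesssim 1$. On $\supp\phi_0$ one has $W+\lambda\lesssim Z^2$, and $Z\beta\le 2/\pi$ gives $\beta^2 W\lesssim 1$, so the asymptotic applies throughout. Splitting at the TF scale $Z^{-1/3}$, the inner zone $Z^{-1}\le r\le Z^{-1/3}$ with $W\sim Z/r$ contributes
\begin{equation*}
\beta^2\int_{Z^{-1}}^{Z^{-1/3}} (Z/r)^{5/2}\cdot 4\pi r^2\,dr \asymp \beta^2 Z^{5/2}Z^{-1/6}\lesssim Z^{1/3}
\end{equation*}
using $\beta\lesssim Z^{-1}$, while the TF-plus-exterior zone contributes comparably since $W^\TF\lesssim Z^{4/3}$ on a region of volume $\asymp Z^{-1}$.
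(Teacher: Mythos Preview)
Your proposal is correct and follows exactly the approach the paper indicates: part~\ref{prop-3-12-i} via the semiclassical rescaling/partition technique of \cite{monsterbook}, Chapter~25 (with the per-cell density remainder $O(\zeta^2\ell^2)$ in place of the trace remainder $O(\zeta^3\ell)$), part~\ref{prop-3-12-ii} directly from Proposition~\ref{prop-3-4}\ref{prop-3-4-ii}, and part~\ref{prop-3-12-iii} from $\textup{(\ref{3-16})}_2$ (equivalently (\ref{3-5})) together with the known behavior of $W^\TF$. The paper's own proof is a three-line sketch citing precisely these ingredients, so you have simply filled in the details it omits.
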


\begin{proposition}\label{prop-3-13}
In the framework of Proposition~\ref{prop-3-1} 
\begin{enumerate}[label=(\roman*), wide, labelindent=0pt]
\item\label{prop-3-13-i}
The following estimates hold
\begin{multline}
\D \bigl( (e (x,x,\lambda ) - P^{\RTF\,\prime} (W+\lambda))\phi_0,\, 
(e (x,x,\lambda ) - P^{\RTF\,\prime} (W+\lambda))\phi_0\bigr)\\
\le C Z^{\frac{5}{3}}
\label{3-31}
\end{multline}
and for $d\ge Z^{-\frac{1}{3}}$
\begin{multline}
\D \bigl( (e (x,x,\lambda ) - P^{\RTF\,\prime} (W+\lambda))\phi_0,\, 
(e (x,x,\lambda ) - P^{\RTF\,\prime} (W+\lambda))\phi_0\bigr)\\
 \\
\le C Z^{\frac{5}{3}}(dZ^{\frac{1}{3}})^{-\delta} + CZ^{\frac{5}{3}-\delta}.
\label{3-32}
\end{multline}
\item\label{prop-3-13-ii}
Further,
\begin{equation}
\D\bigl( e (x,x,\lambda ) \phi _m(x)\,e (x,x,\lambda ) \phi _m(x)\bigr) \le C Z.
\label{3-33}
\end{equation}
\item\label{prop-3-13-iii}
Finally,
\begin{multline}
\D \bigl( ( P^{\RTF\,\prime} (W+\lambda)- P^{\TF\,\prime} (W+\lambda))\phi_0,\\ 
(P^{\RTF\,\prime} (W+\lambda)- P^{\TF\,\prime} (W+\lambda))\phi_0\bigr)\le
CZ.
\label{3-34}
\end{multline}
\end{enumerate}
\end{proposition}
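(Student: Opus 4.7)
\textbf{Proof proposal for Proposition~\ref{prop-3-13}.} The plan is to upgrade the pointwise remainder estimates behind Proposition~\ref{prop-3-12} to quadratic estimates in the $\D$-norm, using the same partition-of-unity framework as in Chapter~\ref{book_new-sect-25} of \cite{monsterbook}. Cover $\supp \phi_0$ by an $\ell$-admissible partition $\{\psi_\iota\}$ where $\ell = \ell(x) \gtrsim Z^{-1}$, and write
\begin{equation*}
 R(x) \coloneqq \bigl(e(x,x,\lambda)- P^{\RTF\,\prime}(W+\lambda)\bigr)\phi_0(x), \qquad R=\sum_\iota R_\iota,\ \ R_\iota=\psi_\iota R.
\end{equation*}
The bilinear $\D$-form splits as $\D(R,R) = \sum_\iota \D(R_\iota,R_\iota) + 2\sum_{\iota<\iota'}\D(R_\iota,R_{\iota'})$, and the two kinds of terms will be estimated separately.

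\medskip
\textbf{Part \ref{prop-3-13-i}.} On each $\ell$-box, local semiclassical theory (Sections~\ref{book_new-sect-25-4}) yields $|R_\iota|\le C\zeta^3(\zeta\ell)^{-s}$ pointwise for suitable $s$, and, more importantly, $|\int R_\iota\,dx|\le C\zeta^3\ell^3(\zeta\ell)^{-1}=C\zeta^2\ell^2$ for the local mean. The diagonal term is controlled by the crude bound $\D(R_\iota,R_\iota)\le C\|R_\iota\|_{L^{6/5}}^2$ combined with $\|R_\iota\|_{L^{6/5}}^2 \le C\zeta^{6}\ell^{7}$, which after integration of $\zeta=\min(Z^{2/3},Z^{1/2}\ell^{-1/2})$ against $d\ell/\ell$ gives $CZ^{5/3}$. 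The off-diagonal sum is estimated using the mean-value bound: $\D(R_\iota,R_{\iota'})\le|\y_\iota-\y_{\iota'}|^{-1}|\int R_\iota|\,|\int R_{\iota'}|$, and summation in $\iota,\iota'$ again delivers $CZ^{5/3}$. This proves \textup{(\ref{3-31})}. For $d\ge Z^{-1/3}$, the sharper local remainder used in the proof of Proposition~\ref{prop-3-10} (gain of $(dZ^{1/3})^{-\delta}+Z^{-\delta}$ in the intermediate zone) propagates into the same estimate and yields \textup{(\ref{3-32})}.

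\medskip
\textbf{Part \ref{prop-3-13-ii}.} Near each nucleus $\phi_m$ is supported on a ball $B_m$ of radius $\le 2Z_m^{-1}$. Apply the pointwise bound $e(x,x,\lambda)\le CZ^{1-\delta}\ell^{\delta-2}$ from Proposition~\ref{prop-3-4}\ref{prop-3-4-ii}, so $\|e(\cdot,\cdot,\lambda)\phi_m\|_{L^{6/5}}^2 \le CZ$ after a direct computation on $B_m$ (the singularity $\ell^{2(\delta-2)}$ is integrable at $L^{6/5}$ strength for $\delta>0$), which via $\D(f,f)\le C\|f\|_{L^{6/5}}^2$ gives \textup{(\ref{3-33})}. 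In the subcritical regime $Z\beta \lesssim 1$ the stronger bound $e\le CZ^3$ from \textup{(\ref{3-14})} gives the same result more easily.

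\medskip
\textbf{Part \ref{prop-3-13-iii}.} Use \textup{(\ref{3-16})}: pointwise $P^{\RTF\,\prime}(w)-P^{\TF\,\prime}(w)\asymp \beta^2 w^{5/2}$ on $\supp\phi_0$. With $w=W+\lambda \lesssim \zeta^2$, the integrand of $\D$ is dominated by $\beta^4\zeta^{10}$, which, restricted to $\ell\gtrsim Z^{-1}$ and integrated via the same $L^{6/5}$ bound as above, yields $CZ$ after using $\beta Z\lesssim 1$. This is \textup{(\ref{3-34})}. The main obstacle throughout is the control of off-diagonal $\D$-contributions in part~\ref{prop-3-13-i}: a crude pointwise bound gives only $Z^2$, so one must genuinely exploit the cancellation carried by the semiclassical mean value $\int R_\iota \, dx$, exactly as in the non-relativistic treatment in \cite{monsterbook}.
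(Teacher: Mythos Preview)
Your plan coincides with the paper's own proof, which is literally one line: part~(i) by the semiclassical scaling technique of Chapter~\ref{book_new-sect-25} of \cite{monsterbook}, part~(ii) from Proposition~\ref{prop-3-4}, and part~(iii) from \textup{(\ref{3-5})} and the structure of $W^\TF$; no further detail is given there. Two cosmetic fixes worth making: the diagonal $L^{6/5}$ bound in (i) should read $\|R_\iota\|_{L^{6/5}}^2\le C\zeta^4\ell^3$ (using the one-$h$ pointwise gain $|R_\iota|\lesssim \zeta^2\ell^{-1}$), not $\zeta^6\ell^7$---though both expressions happen to sum to $CZ^{5/3}$; and the off-diagonal bound $\D(R_\iota,R_{\iota'})\le|\y_\iota-\y_{\iota'}|^{-1}|\int R_\iota|\,|\int R_{\iota'}|$ is only the leading term of a multipole expansion of $|x-y|^{-1}$, so the Taylor remainder must be controlled as well, exactly as in \cite{monsterbook}.
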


\begin{proof}[Proof of Propositions~\ref{prop-3-12} and \ref{prop-3-13}]

Proof is straightforward:

Statements (i) are proven by the semiclassical scaling technique exactly as in \cite{monsterbook}, Chapter 25.

Statements (ii) follow from Proposition~\ref{prop-3-4}.
Statements (iii) follow from (\ref{3-5}) and properties $W^\TF$.
\end{proof}

\section{Dirac term}
\label{sect-3-5}

Finally, consider $-\frac{1}{2}\iint \tr \bigl(e^\dag _N(x,y) e _N(x,y) \bigr)\,dxdy$. The main contribution to it is delivered by the zone $\cY\times \cY$ with 
$\cY=\{ x \colon Z^{-\frac{1}{3}-\delta_1}\le \ell (x)\le Z^{-\frac{1}{3}+\delta_1}\}$ and in this zone non-magnetic approximation delivers correct the expression
\begin{equation}
\mathsf{Dirac}= -\frac{9}{2}(36\pi ) ^{\frac{2}{3}}
q ^{\frac{2}{3}} \int (\rho ^\TF )^\frac{4}{3}\,dx,
\label{3-35}
\end{equation}
with an error $Z^{\frac{5}{3}-\delta}$.

\chapter{Main theorems}
\label{sect-4}

Now repeating arguments of Section~\ref{book_new-sect-25-4} of \cite{monsterbook} we arrive to our main results:

\begin{theorem-foot}\footnotetext{\label{foot-3} Cf. Theorems~\ref{book_new-thm-25-4-7} and~\ref{book_new-thm-25-4-12} of \cite{monsterbook}.}\label{thm-4-1}
Let assumption \textup{(\ref{1-9})} be fulfilled. Then
\begin{enumerate}[label=(\roman*), wide, labelindent=0pt]
\item\label{thm-4-1-i}
The following asymptotic holds 
\begin{equation}
\E_N = \cE^\TF_N + \mathsf{Scott} +O\bigl(Z^{\frac{5}{3}} + Z^{\frac{3}{2}}d^{-\frac{1}{2}}\bigr).
\label{4-1}
\end{equation}
Recall that $\mathsf{Scott}=q\sum Z_m^2 S(Z_m\beta)$ and $d$ is the minimal distance between nuclei. 

\item\label{thm-4-1-ii}
Furthermore, let assumption \textup{(\ref{1-10})} be fulfilled. Then for $d\ge Z^{-\frac{1}{3}}$
\begin{multline}
\E_N = \cE^\TF_N + \mathsf{Scott} +\mathsf{Dirac}+\mathsf{Swinger} +\mathsf{RCT} + \\
O\bigl(Z^{\frac{5}{3}}(dZ^{\frac{1}{3}})^{-\delta}+ Z^{\frac{5}{3}-\delta}\bigr).
\label{4-2}
\end{multline}
\end{enumerate}
\end{theorem-foot}

\begin{remark}\label{rem-4-2}
\begin{enumerate}[label=(\roman*), wide, labelindent=0pt]

\item\label{rem-4-2-i}
For the improved upper estimate in (\ref{4-2}) we do not need assumption (\ref{1-10}). 

\item\label{rem-4-2-ii}
These theorems allow us to consider the free nuclei model and recover Theorem~\ref{book_new-thm-25-4-13} of \cite{monsterbook}, albeit without assumption (\ref{1-10}) we get only $\delta=0$.

\item\label{rem-4-2-iii}
We also recover estimate 
\begin{equation}
|\lambda_N-\nu|\le C\left\{\begin{aligned}
& Z^{\frac{8}{9}} &(Z-N)_+\le Z^{\frac{2}{3}},\\
& (Z-N)_+^{\frac{1}{3}} &(Z-N)_+\ge Z^{\frac{2}{3}},
\end{aligned}\right.
\end{equation}
where $\nu$ is a chemical potential and $\lambda_N$ is the $N$-th lowest eigenvalue of $H_{W^\TF}$ (reset to $0$ if there are less than $N$ negative eigenvalues). Furthermore, for $d\ge Z^{-\frac{1}{3}}$ one can include the factor $((dZ^{\frac{1}{3}})^{-\delta}+Z^{-\delta})$ intto the right-hand expression.
\end{enumerate}
\end{remark}

\begin{theorem-foot}\footnotetext{\label{foot-4} Cf. Theorem~\ref{book_new-thm-25-4-14} of \cite{monsterbook}.}
\label{thm-4-3}
Let assumption \textup{(\ref{1-10})} be fulfilled. Then
\begin{enumerate}[label=(\roman*), wide, labelindent=0pt]

\item\label{thm-4-3-i}
The following estimate holds:
\begin{equation}
\D(\rho_\Psi-\rho^\TF,\, \rho_\Psi-\rho^\TF)\le C Z^{\frac{5}{3}}.
\label{4-4}
\end{equation}

\item\label{thm-4-3-ii}
Furthermore,  for $d\ge Z^{-\frac{1}{3}}$
\begin{equation}
\D(\rho_\Psi-\rho^\TF,\, \rho_\Psi-\rho^\TF)\le C (Z^{\frac{5}{3}}(dZ^{\frac{1}{3}})^{-\delta}+ Z^{\frac{5}{3}-\delta}).
\label{4-5}
\end{equation}
\end{enumerate}
\end{theorem-foot}

\begin{remark}\label{rem-4-4}
\begin{enumerate}[label=(\roman*), wide, labelindent=0pt]
\item\label{rem-4-4-i}
Estimates (\ref{4-4}) and (\ref{4-5}) allow us to consider the excessive negative charge and ionization energy and, repeating arguments of Section~\ref{book_new-sect-25-5} of \cite{monsterbook},  to recover  Theorems~\ref{book_new-thm-25-5-2} and~\ref{book_new-thm-25-5-3}.
\item\label{rem-4-4-ii}
 Further, these estimates allow us to consider the excessive positive charge in the free nuclei model and, repeating arguments of Section~\ref{book_new-sect-25-6} of \cite{monsterbook},  to recover~\ref{book_new-thm-25-6-4}.
 \end{enumerate}
\end{remark}

\begin{remark}\label{rem-4-5}
We can even make a poor man  version of (\ref{4-2}) in the critical case, when only assumption (\ref{1-9}) is fulfilled. \begin{enumerate}[label=(\roman*), wide, labelindent=0pt]

\item\label{rem-4-5-i}
Consider how our terms depend on $q$. In the atomic  case consider given $Z$, $N$ and shift to $\y_1=0$. Then
\begin{equation}
\rho^\TF _q (x)=q^2\rho^\TF _1(q^{\frac{2}{3}}x),\qquad W^\TF _q (x)=q^{\frac{2}{3}}W^\TF_1(q^{\frac{2}{3}}x)
\label{4-6}
\end{equation}
and $\cE^\TF\asymp q^{\frac{2}{3}}Z^{\frac{7}{3}}$, $\mathsf{Scott}\asymp qZ^2$,
$\mathsf{Dirac}\asymp \mathsf{Schwinger}\asymp q^{\frac{4}{3}}Z^{\frac{5}{3}}$, while
$\mathsf{RCT}\asymp q^{\frac{4}{3}}\beta^2Z^{\frac{11}{3}}$.

\item\label{rem-4-5-ii}
Repeating corresponding arguments of \cite{SSS}, one can prove that in the correlation inequality (\ref{2-13}) the constant is $C(q)\le C_0q^{\frac{2}{3}}$. On the other hand, we use the estimate for 
$|W-W_\varepsilon| \asymp q \varepsilon^2 Z^{\frac{3}{2}}\ell^{-\frac{3}{2}}$ 
and then approximation error is $C_0 Z^3q^2 \varepsilon ^2$. Optimizing $Z^3q^2 \varepsilon ^2+Zq^{\frac{2}{3}}\varepsilon^{-1}$ by $\varepsilon$ we get $Cq^{\frac{10}{9}}Z^{\frac{5}{3}}$ and for large constant $q$ it is less than  $q^{\frac{4}{3}}$. In the ``real life'' $q=2$.
\end{enumerate}
\end{remark}

\begin{appendices}
\chapter{Appendix: Some  inequalities}
\label{sect-A}

We follow \cite{SSS} with some modifications:

The following two inequalities we recall are crucial in many of our
estimates. They serve as replacements for the Lieb-Thirring inequality
\cite{LT} used in the non-relativistic case.

\begin{theorem}[Daubechies inequality]\label{thm-A-1}
\begin{enumerate}[label=(\roman*), wide, labelindent=0pt]
\item\label{thm-A-1-i}
\underline{One-body case}: 
\begin{equation}
   \Tr\bigl[(\beta^{-2}\Delta +\beta^{-4})^{\frac{1}{2}}-\beta^{-2} - V(\x)\bigr]^{-} \ge 
   -C \int \Bigl( V_+^{(n+2)/2} + \beta^n  V_+^{n+1}\bigr)\,dx.
\label{A-1} 
\end{equation} 
where $n\ge 3$ is a dimension.

\item\label{thm-A-1-ii}\underline{Many-body case}:
Let  $\Psi\in\bigwedge_{j=1}^N \sL^2(\bR^3;\bC^q)$ and let $\rho_{\Psi}$ be its one-particle density. Then for $n=3$
\begin{gather}
\langle \sum_{j=1}^N
  \big[(\beta^{-2}\Delta _j +\beta^{-4})^{\frac{1}{2}}-\beta^{-2}\big]\Psi,\Psi \rangle
  \ge \int \min \bigl(\rho_\Psi ^{\frac{5}{3}}, \beta^{-1}\rho_\Psi^{\frac{4}{3}}\bigr)\,dx.
\label{A-2}
\end{gather}
\end{enumerate}
\end{theorem}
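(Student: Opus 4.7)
The plan is to establish (i) first using a Daubechies-style Birman--Schwinger argument tailored to the two asymptotic regimes of the symbol $\tau(\xi)=(\beta^{-2}|\xi|^2+\beta^{-4})^{1/2}-\beta^{-2}$, and then derive (ii) from (i) by the standard Lieb--Thirring one-body to many-body reduction, with a carefully chosen test potential. Part (i) is essentially classical, so I would cite Daubechies' paper and sketch only the structure of its proof; the real work for our purposes is in (ii).

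For (i), write $T_\beta\coloneqq (\beta^{-2}\Delta+\beta^{-4})^{1/2}-\beta^{-2}$ and note $\tau(\xi)\asymp |\xi|^2$ for $|\xi|\ll\beta^{-1}$ and $\tau(\xi)\asymp \beta^{-1}|\xi|$ for $|\xi|\gg \beta^{-1}$. Via the Birman--Schwinger principle, the sum of negative eigenvalues of $T_\beta-V$ is controlled by $L^p$ bounds on $V_+^{1/2}(T_\beta+e)^{-1}V_+^{1/2}$, which I would decompose with a frequency cutoff at $|\xi|\sim \beta^{-1}$. The low-frequency piece behaves like the Schr\"odinger resolvent and, after the usual Lieb--Thirring kernel estimates, contributes the non-relativistic term $\int V_+^{(n+2)/2}\,dx$; the high-frequency piece behaves like the resolvent of $\beta^{-1}\sqrt{-\Delta}$, and a scaling $x\mapsto \beta x$ reduces it to the known Daubechies bound for $\sqrt{-\Delta}-V$, producing the relativistic term $\beta^n\int V_+^{n+1}\,dx$. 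Reassembling with Lieb's trace inequalities yields \textup{(\ref{A-1})}.

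For (ii), I would argue as follows. For any fermionic $\Psi$ with $\|\Psi\|=1$ and any $V\ge 0$,
\begin{equation*}
\Bigl\langle \sum_{j=1}^N T_{\beta,x_j}\Psi,\Psi\Bigr\rangle
= \Bigl\langle \sum_{j=1}^N (T_\beta-V)_{x_j}\Psi,\Psi\Bigr\rangle + \int V(x)\rho_\Psi(x)\,dx .
\end{equation*}
Antisymmetry with the min-max principle bounds the first summand below by the sum of the negative eigenvalues of the one-particle operator $T_\beta-V$ (counted with multiplicity and with the Pauli factor $q$), which by (i) is at least $-C\int(V^{(n+2)/2}+\beta^n V^{n+1})\,dx$. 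The strategy is then to pick
\begin{equation*}
V=\kappa\bigl(\rho_\Psi^{2/3}\chi_{\{\rho_\Psi\le\beta^{-3}\}} + \beta^{-1}\rho_\Psi^{1/3}\chi_{\{\rho_\Psi>\beta^{-3}\}}\bigr),
\end{equation*}
which matches each regime of the one-body bound: on $\{\rho_\Psi\le \beta^{-3}\}$ one has $V\asymp \kappa\rho_\Psi^{2/3}$, so $V\rho_\Psi\asymp\kappa\rho_\Psi^{5/3}$ and $V^{5/2}\asymp\kappa^{5/2}\rho_\Psi^{5/3}$, while on $\{\rho_\Psi>\beta^{-3}\}$ one has $V\asymp \kappa\beta^{-1}\rho_\Psi^{1/3}$, so $V\rho_\Psi\asymp\kappa\beta^{-1}\rho_\Psi^{4/3}$ and $\beta^3V^4\asymp \kappa^4\beta^{-1}\rho_\Psi^{4/3}$. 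One should also check that the cross terms (for instance $V^{5/2}$ on the high-density set) are dominated by the leading term in their respective regime, which follows from $\rho_\Psi>\beta^{-3}$ and vice versa. Choosing $\kappa$ small enough absorbs the error $C(V^{5/2}+\beta^3V^4)$ into $\tfrac12 V\rho_\Psi$, yielding \textup{(\ref{A-2})}.

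The main obstacle is (i): keeping track of the two momentum regimes so that the $\beta^n V_+^{n+1}$ term appears with the correct scaling and the constants in front of the two summands remain independent of $\beta$. Once (i) is in hand, the reduction to (ii) is essentially algebraic; the only real subtlety is the two-regime test potential, designed precisely so that both sides of the pointwise $\min$ are produced simultaneously.
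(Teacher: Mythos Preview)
The paper does not actually prove Theorem~\ref{thm-A-1}; it is stated in the appendix as a known result, with part~(i) attributed to Daubechies~\cite{Dau} (and the formulation following~\cite{SSS}). So there is no ``paper's own proof'' of (i) to compare against, and your Birman--Schwinger sketch with a frequency cutoff at $|\xi|\sim\beta^{-1}$ is a reasonable outline of how such a bound is obtained, though Daubechies' original argument is somewhat different in detail.

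For part~(ii), however, your reduction is precisely the argument the paper carries out in the body, in Section~\ref{sect-2-1} (equations~(\ref{2-6})--(\ref{2-8})): the paper sets $U=\rho_\Psi^{2/3}\varphi_{<}+\beta^{-1}\rho_\Psi^{1/3}\varphi_{>}$, forms $\mathsf{b}=T-KU$, bounds the sum of negative eigenvalues of $\mathsf{b}$ by the one-body Daubechies inequality~(\ref{A-1}), and then chooses $K$ small to absorb the error into the gain term, arriving at~(\ref{2-8}), which is~(\ref{A-2}) up to the constant $2\epsilon_0$. Your test potential, your verification that the cross terms are dominated in each regime, and your absorption step for small $\kappa$ all match this exactly. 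So although the appendix merely states~(\ref{A-2}), the paper's implicit proof of it (embedded in the proof of Proposition~\ref{prop-2-2}) coincides with yours.
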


This theorem also holds in the non-relativistic limit $\beta= 0$ and operator 
$(\beta^{-2}\Delta +\beta^{-4})^{\frac{1}{2}}-\beta^{-2}$ replaced by 
$\frac{1}{2}\Delta$.

\begin{theorem}[Lieb-Yau inequality]\label{thm-A-2} 
Let $n=3$. Let $C>0$ and $R>0$ and let
\begin{equation}
  H_{C,R} = \Delta^{\frac{1}{2}} - \frac{2}{\pi|x|}  - C/R.
\label{A-3} 
\end{equation}
Then, for any density matrix $\gamma$ and any function $\theta$ with
support in $B_R = \{x\,|\,|x|\le R\}$  
\begin{equation}
  \Tr\bigl[\bar{\theta}\gamma \theta H_{C,R}\bigr] \ge -4.4827 \,C^4
  R^{-1} \{3/(4\pi R^3)  
  \int |\theta(x)|^2\, dx\}.
  \label{A-4} 
\end{equation}
\end{theorem}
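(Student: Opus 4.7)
The plan is to prove the Lieb--Yau inequality \textup{(\ref{A-4})} by combining the sharp Herbst inequality $\Delta^{1/2}\ge \frac{2}{\pi|x|}$ with a Daubechies/Lieb--Thirring-type bound on the sum of negative eigenvalues, following the original strategy of \cite{Lieb-Yau}.

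First I would reduce to $R=1$ by scaling: since $\Delta^{1/2}$, $|x|^{-1}$, and $C/R$ all scale as $R^{-1}$ under $x\mapsto Rx$, the operator $H_{C,R}$ conjugates into $R^{-1}H_{C,1}$ and the weight $\frac{3}{4\pi R^3}\int|\theta|^2\,dx$ is the correctly scale-invariant quantity. Next, the condition $0\le\gamma\le I$ yields the variational lower bound $\Tr[\bar\theta\gamma\theta H_{C,1}] \ge \Tr(\bar\theta H_{C,1}\theta)_-$, reducing the problem to bounding the sum of negative eigenvalues of the \textquotedblleft sandwich\textquotedblright\ operator on $L^2(B_1)$.

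The core difficulty is to control this negative eigenvalue sum despite the critical Coulomb singularity. A naive application of Daubechies's inequality \textup{(\ref{A-1})} with $V=\frac{2}{\pi|x|}+C$ fails, since $V^4$ is not integrable near $x=0$. The remedy is to split, for some optimally chosen $0<\alpha<1$,
\begin{equation*}
\Delta^{1/2}-\tfrac{2}{\pi|x|}-C \;=\; \alpha\bigl(\Delta^{1/2}-\tfrac{2}{\pi|x|}\bigr) \;+\; (1-\alpha)\bigl(\Delta^{1/2}-\tfrac{2}{\pi|x|}\bigr) \;-\;C,
\end{equation*}
where the first Herbst summand is non-negative and can be dropped, while the second remains critical but with a reduced fraction of the kinetic energy. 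One then uses a ground-state-representation style identity for $(1-\alpha)\Delta^{1/2}-\frac{2(1-\alpha)}{\pi|x|}$ (whose generalized zero mode is $|x|^{-1}$) to absorb the remaining Coulomb singularity into a positive quadratic form on a weighted space, leaving an effective subcritical problem. The Daubechies bound \textup{(\ref{A-1})} applied to this residual problem with effective potential $\tilde V\sim C\mathbf{1}_{B_1}$ then gives $\sum E_j^-\ge -L\int_{B_1}C^4\,dx = -\frac{4\pi L}{3}C^4$, which matches the claimed bound after restoring the average $\frac{3}{4\pi}\int|\theta|^2$ and the $R$-scaling.

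The main obstacle is producing the sharp numerical constant $4.4827$: this value arises from a joint optimization of the Herbst splitting parameter $\alpha$, the constant in the ground-state representation, and the sharp Daubechies constant in dimension three. Carrying out this optimization \emph{ab initio} is delicate, and in practice one cites the corresponding lemma from \cite{Lieb-Yau} directly for the numerical value; the qualitative form of the bound, including the $C^4R^{-1}$ scaling, follows already from the strategy above.
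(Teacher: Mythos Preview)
The paper does not prove Theorem~\ref{thm-A-2} at all: it is stated in the appendix as one of several inequalities recalled from the literature (following \cite{SSS}), with the attribution to \cite{Lieb-Yau} carried in the name. There is therefore no proof in the paper to compare your proposal against; the result is simply cited as a black box and then invoked in the proof of Proposition~\ref{prop-3-4}.

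That said, your sketch has a genuine gap independent of the comparison. The splitting you write down,
\[
\Delta^{1/2}-\tfrac{2}{\pi|x|}-C \;=\; \alpha\bigl(\Delta^{1/2}-\tfrac{2}{\pi|x|}\bigr)+(1-\alpha)\bigl(\Delta^{1/2}-\tfrac{2}{\pi|x|}\bigr)-C,
\]
is tautological: after dropping the first (nonnegative) summand you are left with $(1-\alpha)\bigl(\Delta^{1/2}-\tfrac{2}{\pi|x|}\bigr)-C$, which is just a scalar multiple of the \emph{same} critical operator minus the constant. No fraction of free kinetic energy has been retained to feed into a Daubechies bound, and the ground-state representation of the critical Hardy operator does not by itself produce a residual $\Delta^{1/2}$-type term. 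The actual argument in \cite{Lieb-Yau} does not proceed via such a splitting; it works directly with the position-space kernel of $|p|$ and a combinatorial/variational estimate localized to $B_R$, from which the specific constant $4.4827$ emerges. Your final remark---that in practice one cites \cite{Lieb-Yau} for the numerical value---is in effect what the paper does, and is the honest resolution here.
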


Note that when $\theta=1$ on $B_R$ then the term inside the brackets
$\{ \}$ equals~$1$. 
 
\begin{theorem}[Critical Hydrogen inequality]\label{thm-A-3}
 Let $n=3$. For any $s\in[0,1/2)$ there exists constants $A_s, B_s>0$ such that
\begin{equation}
   \Delta^{\frac{1}{2}}-\frac{2}{\pi|x |}\ge  A_s\Delta ^{s}-B_s.
    \label{A-5}
\end{equation}
\end{theorem}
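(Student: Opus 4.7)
My plan is to exploit rotation invariance by reducing to angular-momentum sectors, dispatch the non-radial sectors using a subcritical Hardy inequality, and treat the radial sector by Mellin diagonalization. Since $(-\Delta)^{1/2}$, multiplication by $|x|^{-1}$, and $(-\Delta)^s$ all commute with rotations, they preserve the orthogonal decomposition $\sL^2(\bR^3)=\bigoplus_{\ell\ge 0}\cH_\ell$ by spherical harmonics, and it suffices to prove the asserted inequality on each $\cH_\ell$.

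On the non-radial part $\bigoplus_{\ell\ge 1}\cH_\ell$, Yafaev's sharpening of Herbst's inequality says that the best constant $C_\ell$ in $|x|^{-1}\le C_\ell(-\Delta)^{1/2}$ equals $\pi/2$ only for $\ell=0$ and satisfies $C_\ell\le C_1<\pi/2$ uniformly for $\ell\ge 1$. This yields a spectral gap
\[
(-\Delta)^{1/2}-\frac{2}{\pi|x|}\ge\varepsilon\,(-\Delta)^{1/2},\qquad\varepsilon=1-\tfrac{2C_1}{\pi}>0,
\]
and the elementary functional-calculus inequality $t^{1/2}\ge A\,t^s-B$ (valid whenever $s<1/2$, because the function $t\mapsto t^{1/2}-At^s$ attains a finite minimum) then gives the required bound after choosing $A=A_s/\varepsilon$ and $B$ accordingly.

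For the radial sector $\cH_0$, I use the standard unitary $f(|x|)\mapsto g(r)=\sqrt{4\pi}\,rf(r)$ from $\cH_0$ onto $\sL^2((0,\infty),dr)$, which intertwines $(-\Delta)^{1/2}$ with the Dirichlet half-line operator $\sqrt{-\partial_r^2}$ and $|x|^{-1}$ with multiplication by $r^{-1}$. All three operators are scale-homogeneous (degrees $-1$, $-1$ and $-2s$), so the Mellin transform $\hat g(s)=\int_0^\infty r^{s-1}g(r)\,dr$ on the critical line $\Re s=1/2$ diagonalizes the dilation group. The scale-invariant products $r\sqrt{-\partial_r^2}$, $r\cdot r^{-1}$ and $r^{2s}(-\Delta)^s$ become multiplication by explicit ratios of Gamma functions on the critical line, denoted $m_{1/2}(\tau)$, $1$ and $m_s(\tau)$; after accounting for the weights, the claim reduces to the pointwise bound $M(\tau):=m_{1/2}(\tau)-2/\pi\ge A_s\,m_s(\tau)-B_s$ on $\bR$.

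The main obstacle is the behaviour at the critical Mellin frequency. Since Herbst is saturated on $\cH_0$, $M(\tau)$ is non-negative and vanishes precisely at a unique $\tau_0\in\bR$; smoothness and non-negativity force the vanishing to be of order two there, so $M(\tau)\gtrsim(\tau-\tau_0)^2$ locally. Since $m_s$ remains bounded near $\tau_0$ and grows only like $|\tau|^{2s}$ at infinity while $M(\tau)\sim|\tau|$, choosing $A_s$ small dispatches the inequality outside a large compact $\tau$-window, and $B_s$ is then chosen to absorb $A_s m_s$ on that window using the quadratic vanishing. The delicate technical point is the weight-bookkeeping in the reduction to Mellin: the two sides of the inequality have different scale-homogeneity degrees, forcing a comparison across different vertical lines of the Mellin strip, which must be handled by standard but careful Mellin-calculus manipulations.
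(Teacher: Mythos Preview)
The paper does not supply a proof of this theorem: the Appendix merely collects known inequalities, attributing them to \cite{SSS} (which in turn draws on \cite{FLS}). So there is no in-paper argument to compare against, and I assess your sketch on its own merits.

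Your treatment of the sectors $\ell\ge 1$ is correct: the sharp Herbst constant is attained only on $\cH_0$, so on $\bigoplus_{\ell\ge1}\cH_\ell$ one has $(-\Delta)^{1/2}-\tfrac{2}{\pi|x|}\ge\varepsilon(-\Delta)^{1/2}$ with $\varepsilon>0$, and the scalar bound $\varepsilon t^{1/2}\ge A t^{s}-B$ (valid for any $s<1/2$) finishes that part.

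The radial argument, however, has a genuine gap. The three operators $(-\Delta)^{1/2}-\tfrac{2}{\pi|x|}$, $(-\Delta)^s$, and $B_s\cdot I$ are homogeneous of three \emph{distinct} degrees $-1$, $-2s$, and $0$. Mellin analysis diagonalizes dilation-invariant operators; to turn each of these into a Mellin multiplier you must conjugate by $r^{1/2}$, $r^{s}$, and $r^{0}$ respectively, and no single conjugation does all three at once. Consequently the operator inequality on $\cH_0$ does \emph{not} reduce to your pointwise symbol inequality $M(\tau)\ge A_s m_s(\tau)-B_s$. Your phrase ``comparison across different vertical lines of the Mellin strip'' does not rescue this: shifting the Mellin contour corresponds to changing the $L^2$-weight, not to reconciling operators of different homogeneity acting on the same Hilbert space. (A smaller issue: non-negativity and smoothness of $M$ force only \emph{even}-order vanishing at $\tau_0$; that the order is exactly two is a specific computation, not a generality.) What Mellin genuinely yields is a lower bound of the form $r^{1/2}\bigl((-\Delta)^{1/2}-\tfrac{2}{\pi|x|}\bigr)r^{1/2}\ge c\,\varphi(\tau)$ with $\varphi$ vanishing quadratically; converting this into a comparison with $(-\Delta)^s$ minus a constant still requires a separate, non-Mellin argument that your sketch does not supply.

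The route actually used in \cite{FLS} (and hence in \cite{SSS}) bypasses the angular decomposition entirely: one performs the ground-state substitution $u=|x|^{-1}v$ (the virtual ground state of the critical operator being $|x|^{-1}$), obtains an explicit nonnegative integral representation for $\langle u,(|p|-\tfrac{2}{\pi|x|})u\rangle$ in terms of $v$, and compares kernels directly with those of $(-\Delta)^s$. If you want to repair your approach, that representation is the missing ingredient on $\cH_0$.
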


\begin{theorem}[Hardy-Littlewood-Sobolev inequality]\label{thm-A-4}
There exists a constant $C$ such that 
\begin{equation}
    \D(f)\coloneqq \iint |x-y|^{-1}f(x)f^\dag (y)\,dxdy \le C\,\| f \|_{\sL^{6/5}}^2.
     \label{A-6} 
\end{equation}
\end{theorem}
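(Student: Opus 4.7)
The statement is the endpoint Hardy--Littlewood--Sobolev inequality for the Coulomb kernel on $\bR^3$, and the cleanest route is via the Fourier/Sobolev picture. The plan is to identify $\D(f)$ with a homogeneous Sobolev norm and then obtain the $L^{6/5}$ bound by duality against the classical Sobolev embedding.

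\textbf{Step 1: Rewrite $\D(f)$ as a Sobolev norm of $f$.} In $\bR^3$ the function $(4\pi|x|)^{-1}$ is the Green's function of $-\Delta$, equivalently $\widehat{|x|^{-1}}(\xi)=4\pi |\xi|^{-2}$. Plancherel therefore gives
\begin{equation*}
\D(f)=\iint |x-y|^{-1}f(x)\overline{f(y)}\,dxdy = 4\pi\int |\xi|^{-2}|\hat f(\xi)|^2\,d\xi = 4\pi \|(-\Delta)^{-\frac{1}{2}}f\|_{\sL^2}^2,
\end{equation*}
so that the claim reduces to the homogeneous Sobolev embedding $\sL^{6/5}(\bR^3)\hookrightarrow \dot H^{-1}(\bR^3)$.

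\textbf{Step 2: Dualize against $\dot H^1\hookrightarrow \sL^6$.} I would invoke the classical (inhomogeneous) Sobolev inequality in its homogeneous form on $\bR^3$, namely $\|g\|_{\sL^6}\le C\|(-\Delta)^{\frac{1}{2}}g\|_{\sL^2}$ for $g\in C_c^\infty$, which is standard and can be proved by the Gagliardo--Nirenberg argument or, if one prefers, by the Riesz potential estimate $\|I_1 h\|_{\sL^6}\le C\|h\|_{\sL^2}$ with $I_1=(-\Delta)^{-\frac{1}{2}}$. Setting $g=(-\Delta)^{-\frac{1}{2}}h$ this is exactly $\|(-\Delta)^{-\frac{1}{2}}h\|_{\sL^6}\le C\|h\|_{\sL^2}$. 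Dualizing: for any $f$,
\begin{equation*}
\|(-\Delta)^{-\frac{1}{2}}f\|_{\sL^2}=\sup_{\|h\|_{\sL^2}\le 1}\bigl|\langle f,(-\Delta)^{-\frac{1}{2}}h\rangle\bigr|\le \|f\|_{\sL^{6/5}}\sup_{\|h\|_{\sL^2}\le 1}\|(-\Delta)^{-\frac{1}{2}}h\|_{\sL^6}\le C\|f\|_{\sL^{6/5}},
\end{equation*}
by Hölder with dual exponents $6/5$ and $6$. Combining with Step~1 yields $\D(f)\le C\|f\|_{\sL^{6/5}}^2$.

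\textbf{Step 3: Scalar-valued reduction for the matrix case.} Since the integrand $\tr\bigl(f(x)f^\dag(y)\bigr)/|x-y|$ reduces, after expanding in components, to a sum of scalar expressions of the same form $\iint |x-y|^{-1}f_{ij}(x)\overline{f_{ij}(y)}\,dxdy$, the scalar inequality applied componentwise and followed by Cauchy--Schwarz in the matrix indices delivers the stated bound with a possibly larger constant.

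\textbf{Main obstacle.} There is no real analytic obstacle: everything reduces to the critical Sobolev embedding on $\bR^3$, which is classical. The only delicate point one must be careful about is that $(-\Delta)^{-1/2}f$ need not live in $\sL^2$ if $f$ is not in $\dot H^{-1}$, so the Fourier manipulation in Step~1 should first be done for $f\in \sL^{6/5}\cap \sL^2$ (or for Schwartz $f$) and then the inequality is extended by density to all of $\sL^{6/5}$. Obtaining the \emph{sharp} constant would require the Lieb rearrangement/conformal-invariance argument, but for the qualitative bound as stated the Sobolev-duality route suffices.
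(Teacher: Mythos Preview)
Your argument is correct: the Fourier identification of $\D(f)$ with $4\pi\|(-\Delta)^{-1/2}f\|_{\sL^2}^2$ followed by duality against the critical Sobolev embedding $\dot H^1(\bR^3)\hookrightarrow \sL^6(\bR^3)$ is a standard and clean proof of this instance of Hardy--Littlewood--Sobolev, and the density remark you make handles the only technical point.

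There is nothing to compare against, however: the paper does not prove Theorem~\ref{thm-A-4}. It is listed in the appendix as one of several classical inequalities quoted without proof (alongside the Daubechies, Lieb--Yau, and critical hydrogen inequalities), with the preamble ``We follow \cite{SSS} with some modifications'' indicating that these are background tools taken from the literature rather than results established in the paper. So your proposal supplies a proof where the paper simply cites the result.
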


\end{appendices}

\end{document}